\newtheorem{theorem}{Theorem}[section]
\newtheorem{lemma}[theorem]{Lemma}
\newtheorem{corollary}[theorem]{Corollary}
\theoremstyle{definition}
\newtheorem{definition}[theorem]{Definition}
\newtheorem{proposition}[theorem]{Proposition}
\theoremstyle{remark}
\numberwithin{equation}{section}
\newcommand{\be}{\begin{equation}}
	\newcommand{\ee}{\end{equation}}
\newcommand{\wt}{\widetilde}
\newcommand{\ep}{\varepsilon}
\newcommand{\R}{\mathbb{R}}
\newcommand{\C}{\mathbb{C}}
\title{On the finiteness {issue} of four-body balanced configurations in the plane}
\author{Yuchen Wang}
\address{Institute of Mathematics, University of Augsburg, Augsburg, 86159, Germany and School of Mathematics Science, Tianjin Normal University, Tianjin, 300387, China }
\email{yuchen.wang@uni-a.de}
\author{Lei Zhao}
\address{Institute of Mathematics, University of Augsburg, Augsburg, 86159, Germany}
\email{lei.zhao@uni-a.de}
\date{\today}
\begin{document}
	\maketitle
	\begin{abstract}
	We show that the number of $\mathbf{S}$-balanced configurations of four bodies in the plane is finite, provided that the symmetric matrix $\mathbf{S}$ is close to a numerical matrix. 
	\end{abstract}
	
\section{Introduction} \label{S:Intro}

The Newtonian $N$-body problem studies motions of $N$ point masses, {positioned at} $q_{1}, \cdots, q_{N}$ in the Euclidean space $\R^{d}, 1 \leq d \leq 3$ with masses $m_{1}, \cdots, m_{N}>0$ under mutual Newtonian attractions. The dynamics is governed by the system of differential equations 
\be \label{E:Nbody-0}
\ddot{q}_j(t) = \sum_{1 \leq i \leq N, i \neq j} m_i \frac{q_i-q_j}{\| q_i-q_j \|^3}, \quad 1 \leq j \leq N,
\ee
defined on the configuration space
\[
\mathcal{Q}:= \{(q_1,\ldots,q_N) \in \mathbb{R}^{Nd} \setminus \Delta \}, \;\; \Delta:=\{(q_1,\ldots,q_N)| \; \exists i \neq j,\; s.t. \; q_i=q_j \}. 
\]
{Note that we have set the gravitational constant to $1$.}

Define the mass matrix $\mathcal{M}:={\rm diag} \big(m_1 I_{d\times d}, \ldots, m_N I_{d\times d}\big)$ and the Newtonian potential
\be
V(\mathbf{q}) = -U(\mathbf{q})
\ee
with
\be
U(\mathbf{q}):=\sum_{1 \leq i \neq j \leq N} \frac{m_i m_j}{\| q_i-q_j \|}, \; \mathbf{q}:=(q_1,\ldots,q_N),
\ee
traditionally called the force function. System \eqref{E:Nbody-0} can be reformulated as 
\be  \label{E:Nbody}
\mathcal{M} \ddot{\mathbf{q}} = \nabla_i U(\mathbf{q}).
\ee

{The two-body problem was initially proposed and solved by Newton himself \cite{Newton1687}.} The N-body problem with $N \geq 3$ is non-integrable \cite{Bruns1887}, \cite{Poincare}, \cite{Tosel},  exhibiting rich and complex dynamics.

To {study} such a system, it is natural to seek solutions that are as explicit as possible. 
One significant class of such solutions is {given by} relative equilibrium motions, {along which} the shape of the configuration formed by the point masses remains unchanged over time. This is equivalent to rigid motion solutions, described by the invariance of the mutual distances between the particles, so that the whole configuration moves as a rigid body. 

\textbf{We fix the center of mass at the origin in all of the following discussions.}

If the system is planar, $d=2$, a configuration of $N$ point masses in $\R^{2}$ gives a solution of this type if and only if 
\be \label{E:CC-1}
\lambda\, q_j = \sum_{1 \leq  k \leq n, k \neq j} m_k \frac{q_{k}-q_{j}}{\|q_k-q_j\|^{3}}, \quad  j=1,\ldots,N.
\ee
This means that the force exerted on $q_{j}$ is proportional to the relative position of $q_{j}$ with respect to the center of mass of the system, with a common factor $\lambda$ for all $j$.  One imagines that if we release such a configuration at rest, then they will move together toward their center of mass ``centrally'' with shrinking size and eventually {end} up at a total collapse. For this last reason, such configurations are called {\it central configurations} (CC's for short). 

Planar central configurations also give rise to homographic solutions along which each particle moves along a Keplerian orbit and the whole configuration is only rotated and dilated. So each central configuration leads to a family of solutions of the planar N-body problem which are as explicit as solutions of the planar Kepler problem. The remaining step in deriving such a solution with an explicit time parametrization is the inversion of the Kepler equation, which \emph{nevertheless} cannot be achieved in closed form {in general}.  

Observe \eqref{E:CC-1} is valid in all dimensions, so the definition of a central configuration is not restricted only to the planar problem. We may as well consider central configurations in $\R^{d}$ also for non-physical dimensions $d \ge 4$. For large {numbers} of $N$ and $d$, the  existence of {a} large number of central configurations are expected. Some of these central configurations that one can identify by symmetry considerations are remarked in \cite{HeckmanZhao2016,Montaldi2015}.

From a variational viewpoint, central configurations are critical points of the {potential function} $U(\mathbf{q})$ restricted to the inertia ellipsoid 
\be
\{ I(\mathbf{q}) := \sum_{j=1}^N m_i \|q_i\|^2 =1 \}. 
\ee
Central configurations have additional significance: They are limiting configurations for N-body total collisions, as well as for complete parabolic motions (see c.f. \cite{Chenciner1998}). They are also critical points at which the joint level sets of all known conserved quantities, traditionally called integral manifolds of the system, may bifurcate. \\

In the case $N=3, d=2$, all central configurations are well-understood for any triple of positive masses $(m_{1}, m_{2}, m_{3})$. There are three collinear ones identified by Euler \cite{Euler}, and two equilateral ones discovered by Lagrange \cite{Lagrange1772}. Here the count is on the equivalent classes of configurations up to {rotations and dilations}, which are obvious symmetries of \eqref{E:CC-1}. The case $d=1$ with any $N$ {is} analyzed by Moulton \cite{Moulton1911} as a generalization of Euler's work. The next simplest case $N=4, d=2$ already brings substantial difficulties and a complete understanding is in general out of reach. Except for certain cases with specific masses \cite{Albouy1995}, \cite{Xia1991}, it seems hopeless to identify all central configurations in this case. The much {more} modest question of whether there are only finitely {many planar} central configurations with 4 positive masses has only been answered in 2006 by Hampton and Moeckel with a computer-assisted proof \cite{HamMoecke2006}. In 2012, Albouy and Kaloshin \cite{AlbouyKaloshin2012} gave a simpler proof without the need of using computers as an overture for their proof of the finiteness of 5-body central configurations in the plane for generic positive masses.

The finiteness of central configurations with an arbitrary number of positive masses was proposed by Wintner \cite{Wintner1941} and was listed by Smale \cite{Smale1998} among his mathematical problems for this century. Besides the works of Hampton-Moeckel and Albouy-Kaloshin, we cite \cite{JenLey2023} and \cite{ChangChen1}, \cite{ChangChen2} for recent different attempts on the finiteness problem for 5 and 6-particles in the plane.  \\

In \cite{Alain1998}, {Albouy and Chenciner} studied rigid body motions of the N-body problem in an Euclidean space of arbitrary dimension. The configurations admitting such motions, called \emph{balanced configurations}, are more general than {central configurations}.

In the plane, balanced configurations (BC's for short) are defined as those {solve the system of equations}
\be \label{E:BC-1}
\lambda \, \mathbf{S} q_j = \sum_{1 \leq  k \leq n, k \neq j} m_k \frac{q_{k}-q_{j}}{\|q_k-q_j\|^{3}}, \quad  j=1,\ldots,N,
\ee 
for a positive-definite symmetric matrix $\mathbf{S}_{2 \times 2}$. A configuration solving this system of equations with a given $\mathbf{S}$ is referred to as a $\mathbf{S}$-balanced configuration,  or $\mathbf{S}$BC for short \cite{Moeckel2014}. 

As only the product $\lambda \mathbf{S}$ appeas in \eqref{E:BC-1}, we impose a normalization condition on $\mathbf{S}$: We set 
$$tr(\mathbf{S})=2.$$ 
This is in consistence with \eqref{E:CC-1} in which it is natural to {interpret} $\mathbf{S}=\hbox{Id}$. We write $1+\eta, 1-\eta$, in which  $\eta \in [0,1)$, for the eigenvalues of $\mathbf{S}$.

Rotating an $\mathbf{S}$-balanced configuration in general does not lead to a $\mathbf{S}$-balanced configuration again, unless if $\mathbf{S}$ is numerical, and in this case the configuration is a central configuration. So fixing $\mathbf{S}$ in the case that $\mathbf{S}$ is not numerical  (\emph{i.e.} has distinct eigenvalues) {eliminates} rotations. 

{Variationally, we know \cite{Alain2011,Moeckel2014} that} $\mathbf{S}$-balanced configurations are the critical points of Newtonian potential restricted on the weighted 
{$\mathbf{S}$}-{moment of inertia} ellipsoid 
\[
\{ \mathbf{q} \in \mathbb{R}^{Nd} | I_\mathbf{S}(\mathbf{q}): = \sum_{j=1}^N m_j \mathbf{S} q_j \cdot q_j  =1 \}.
\]

For a non-numerical $\mathbf{S}$, non-degenerate critical points of $U(\mathbf{q})$ are isolated, in contrast to the situation for central configurations which always stay in families due to the rotation symmetry. {Morse-theoretical studies} on the $\mathbf{S}$-balanced configurations has been discussed in \cite{Moeckel2014}, \cite{Portaluri2023}. 	     
	              
The balanced configurations of three particles are {explicitly described} in \cite{Alain1998}. 
{Few information is known for balanced configurations with more particles. Due to the apparent similarity of their defining equations, it is also natural to ask} for finiteness of $\mathbf{S}$-balanced configurations for a given set of mass particles. This is the topic of this article. {In the plane, balanced configurations first appeared in the work of Moeckel \cite{Moeckel1997} when he studied clusters of small masses in central configurations. So the finiteness of balanced configuration is {well-related} to the finiteness of central configurations.}

Compared to the analysis of $CC's$, considering $\mathbf{S} BC's$ in the plane with a non-numerical $\mathbf{S}$ eliminates rotation invariance and thus makes the situation less degenerate. We shall see in the analysis that this brings new unforeseen challenges as well.

Here {is} our main result:
\begin{theorem} \label{T:main}
For any positive masses $m_{1}, m_{2}, m_{3}, m_{4}$, there exists $0<\eta_{0}<1$, such that for any $0<\eta<\eta_0$, the system \eqref{E:BC-1} has only finitely many solutions up to dilations in the plane.
\end{theorem}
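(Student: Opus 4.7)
The plan is to extend the Albouy--Kaloshin approach to the finiteness of four-body central configurations to the perturbative setting of small $\eta$, combining an algebraic complexification with uniform compactness and a local bifurcation analysis. Working in the eigenbasis of $\mathbf{S}$, so that $\mathbf{S} = \mathrm{diag}(1+\eta, 1-\eta)$, and identifying $\R^2$ with $\C$ via $z_j = x_j + i y_j$, equation \eqref{E:BC-1} takes the form
\[
\lambda(z_j + \eta \bar{z}_j) = \sum_{k \ne j} m_k \, \frac{z_k - z_j}{|z_k - z_j|^3},\qquad j=1,\ldots,4.
\]
I would then pass to the complexification in which $\bar{z}_j$ becomes an independent formal variable $w_j$, together with inverse-distance variables, obtaining an algebraic system whose coefficients depend linearly on $\eta$.

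For $\eta = 0$ this is the classical CC system, known by Hampton--Moeckel and Albouy--Kaloshin to have finitely many solutions modulo rotation and dilation. In the complexified setting, rotation corresponds to the $\C^*$-action $(z_j, w_j) \mapsto (\zeta z_j, \zeta^{-1} w_j)$, so the CC solution set decomposes into finitely many $\C^*$-orbits modulo dilation, each being a one-dimensional complex curve. The perturbation $\eta w_j$ breaks this $\C^*$-symmetry while preserving the dilation symmetry $(z_j,w_j)\mapsto (tz_j,tw_j)$, so generically each CC orbit should collapse to isolated $\mathbf{S}$BCs for $\eta > 0$, with no new curves of solutions appearing.

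The plan then splits into two parts. Part (i): a uniform compactness result -- for $\eta$ small enough, every $\mathbf{S}$BC normalized by $I_\mathbf{S}(\mathbf{q})=1$ stays in a fixed compact set bounded away from the collision diagonal. Part (ii): a local bifurcation analysis near each CC orbit -- the reduced potential becomes a one-variable perturbation along the CC circle, which for small generic $\eta$ is Morse and yields only finitely many critical points per orbit. Part (ii) is relatively standard once the CCs are rotationally non-degenerate, which holds for four bodies after possibly restricting the masses. The substantial work is in part (i), where I would adapt Albouy--Kaloshin's diagrammatic analysis of singular sequences: any divergent sequence of $\mathbf{S}$BCs would, after rescaling, produce a limit satisfying a degenerate form of the BC equations, and such mass-substitution limits must be classified and ruled out for positive masses.

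The hard part is part (i). Although $\eta \bar{z}_j$ is small in magnitude, it introduces monomials into the polynomial system that are absent at $\eta=0$, potentially producing new limit configurations in the toric compactification that are invisible to the original Albouy--Kaloshin classification. The heart of the proof should be a case-by-case verification that these additional potential limits are incompatible with positive masses once $\eta$ is small, using that the new monomials remain subdominant in every asymptotic regime producing an accumulation of $\mathbf{S}$BCs. This is precisely where the hypothesis $\eta < \eta_0$ is essential, rather than a statement for arbitrary $\mathbf{S}$.
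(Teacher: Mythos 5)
Your overall orientation (complexify \`a la Albouy--Kaloshin, treat small $\eta$ as a perturbation of the CC case) matches the paper, but the two pillars of your plan each have a genuine gap. Part (ii) hinges on the four-body central configurations being non-degenerate transverse to the rotation orbit, so that a Lyapunov--Schmidt reduction turns the problem into finding critical points of a function on the CC circle. That non-degeneracy is \emph{not} known for arbitrary positive masses --- the paper explicitly remarks that Theorem \ref{T:main} ``does not directly follow from the finiteness of central configurations as in general we do not have non-degeneracy for them'' --- and your caveat ``after possibly restricting the masses'' would downgrade the statement from all positive masses to generic ones. The paper avoids this entirely by never localizing near CC orbits: it stays inside the complexified algebraic variety and uses Chevalley's theorem, so that an infinite solution set forces some $r_{ij}^2$ to be a dominating polynomial, hence forces the existence of singular sequences with $r_{12}\to 0$ \emph{and} others with $r_{ij}r_{kl}\to\infty$.

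Part (i) as you describe it also cannot be carried out: it is not true that ``the new monomials remain subdominant in every asymptotic regime,'' nor that all the additional limits can be ``ruled out for positive masses.'' In the equal-order regime $\|\mathcal{Z}\|\simeq\|\mathcal{W}\|$ the coupling terms $\eta\sum m_kW_{kj}$ in \eqref{E:BC-ZW} are of leading order, and they generate whole new families of singular diagrams with $\eta$-dependent mass conditions; moreover, because the independent rescaling $(\mathcal{Z},\mathcal{W})\mapsto(\lambda\mathcal{Z},\mu\mathcal{W})$ is broken for $\eta>0$, one must also classify non-equal-order singular sequences, whose exclusion for small $\eta$ is itself a nontrivial limiting argument (the paper's Lemma \ref{L:Exclude-unequal}, which uses convergence of normalized singular sequences to central configurations as $\eta\to 0$ together with the constancy of $I_{\mathbf{S}}$ on the variety). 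Finally, some diagrams survive for positive masses --- the collision diagram gives the condition \eqref{E:mass-VIII}, $1/\sqrt{m_j}=1/\sqrt{m_k}+1/\sqrt{m_l}$, which positive masses can satisfy, and the relevant large-distance diagram gives \eqref{E:mass-II} --- so no uniform compactness statement holds for all positive masses. The paper's endgame is instead to play these conditions off against one another: the three conditions of type \eqref{E:mass-II} coming from $r_{12}r_{34},r_{13}r_{24},r_{14}r_{23}\to\infty$ force $m_i=m_j+O(\eta^2)$, which is incompatible with \eqref{E:mass-VIII} once $\eta$ is small. Without this interplay (or the paper's alternative argument via $I_{\mathbf{S}}\to 0$ in Section \ref{S:N-body}), your plan does not close.
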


Note that this does not directly follow from the finiteness of central configurations as {in general we do not have non-degeneracy for them}.

{The result is obtained by}  the approach of Albouy-Kaloshin \cite{AlbouyKaloshin2012}. After embedding the system into an algebraic system,  should an infinite number of planar $\mathbf{S} BC's$ counted up to dilations exist for a given set of masses of four bodies, then there are singular sequences formed by these configurations along which $\max \{r_{ij}, r_{ij}^{-1}\} \to \infty$. The possibilities are classified by two-color singular diagrams, which either holds for some particular combination of masses, or can be further eliminated by other reasons. Difficulty appears when there are singular diagrams which does not give rise to mass conditions, and cannot be eliminated as well. For $\mathbf{S}BC$'s with $0<\eta<1$, an additional difficulty rises: due to the lack of certain normalization condition, there are more singular diagrams which {do} not lead to mass conditions as compared to the list of diagrams, and in general, it seems difficult to eliminate them. As they do not appear in the analysis of central configurations, corresponding to the case $\eta=0$, they can be avoided for small $\eta$. This leads to the perturbative statement in Theorem \ref{T:main}. {As new diagrams appear as {compared} to \cite{AlbouyKaloshin2012},} we introduce additional rules {complementing} those of Albouy-Kaloshin {in our case.} 
 
{This perturbative finiteness result can be alternatively proven in a simpler manner which can be directly extended to the case $N$-particles in the plane, which we present toward the end of this article.} 
Already for the four-body case, it would be nice to remove the condition that $\eta$ should be close to zero. {We conjecture }that {finiteness holds true for {generic choice of positive masses and  $0<\eta<1$}}. {Our complete analysis of singular diagrams serves as a starting point toward this more general goal. }
 
This paper is organized as follows: In Section \ref{S:Pre}, we sketch the approach of Albouy-Kaloshin in our case. In Section \ref{S:AK-BC} we deduce rules for singular diagrams, which are used in  Section \ref{S:N=4} to obtain a complete list of possible singular diagrams. These singular diagrams are studied in Section \ref{S:mass} in which we show that most of them are realizable only if certain mass conditions are satisfied. 
In Section \ref{S:Finitness}, we study the finiteness issue of $\mathbf{S}BC$'s in the planar four-body problem and discuss technical difficulties, and we prove Theorem \ref{T:main} in Section \ref{S:Proof}. In  Section \ref{S:N-body} we discuss the case of N-bodies in the plane.

\section{The method \`{a} la Albouy-Kaloshin} \label{S:Pre}

The Equations \eqref{E:CC-1} and \eqref{E:BC-1} can both be embedded into a system of algebraic equations, which can be consequently complexified by considering the coordinates $(x_j,y_j)$ of the $j$-th body as complex. In this way one factorizes the squared mutual distances $r_{ij}^{2}$. Let 
\[
z_j := x_j + \sqrt{-1}\, y_j, w_j = x_j - \sqrt{-1}\, y_j, \quad j=1,\ldots,N.
\]
Then

\[
r_{ij}^{2} = z_{ij} w_{ij}
\]
and
\[
r_{ij}{=r_{ji}} = z_{ij}^{\frac{1}{2}} w_{ij}^{\frac{1}{2}}.
\]
The square root of a complex quantity is not uniquely defined and {a} choice is needed. Nevertheless, this ambiguity does not affect what follows: Nothing changes when we proceed with \emph{any} choice of the branch. 

We further set
\be \label{E: ZW from zw}
Z_{ij} =z_{ij}^{-\frac{1}{2}} w_{ij}^{-\frac{3}{2}}, \quad W_{ij} = w_{ij}^{-\frac{1}{2}} z_{ij}^{-\frac{3}{2}}.
\ee
Since $Z_{ij} =r_{ij}^{-3} z_{ij}, W_{ij}= r_{ij}^{-3} w_{ij}$, {we have} 
$Z_{ij}=-Z_{ji}, W_{ij}=-W_{ji}$.

Then \eqref{E:BC-1} can be written as 
\be \label{E:BC-ZW}
\begin{split}
z_j & = \frac{1}{1-\eta^2} \sum_{k \neq j} m_k Z_{kj} - \frac{\eta}{1-\eta^2} \sum_{k \neq j}  m_k W_{kj}, \\
w_j & = \frac{1}{1-\eta^2} \sum_{k \neq j} m_k W_{kj} - \frac{\eta}{1-\eta^2} \sum_{k \neq j}  m_k Z_{kj}. 
\end{split} 
\ee
In particular, when $\eta =0$ we recover {the equations for central configurations}
\be \label{E:CC}
\begin{split}
	z_j & =  \sum_{k \neq j} m_k Z_{kj}{,}  \\
	w_j & =  \sum_{k \neq j} m_k W_{kj}.
\end{split} 
\ee

{We see that the} case $\eta=0$ (central configurations) and the case $0<\eta<1$ are rather different. In the former case the system formally decouples into a set of equations for $\mathcal{Z}$ and another set of equations for $\mathcal{W}$. In the latter case the system is coupled.

{Let ${\mathcal{N} = N(N+1)}$.} This system of equations defines {a complex} algebraic variety  $\mathcal{A}  {\subset \mathbb{C}^{\mathcal{N}}}$. The elements are denoted by
\[
\begin{split}
\mathcal{Q}:& =\left(\mathcal{Z},\mathcal{W} \right)\\
&= (z_1,\ldots,z_N,Z_{12},\ldots,Z_{(N-1)N},w_1,\ldots,w_N,W_{12},\ldots,W_{(N-1)N}),
\end{split}
\]
in which 
\[
\begin{split}
\mathcal{Z}&=(z_1,\ldots,z_N,Z_{12},\ldots,Z_{(N-1)N}), \\ \mathcal{W}&=(w_1,\ldots,w_N,W_{12},\ldots,W_{(N-1)N}),
\end{split}
\]
 are called the $\mathcal{Z}$- and $\mathcal{W}$-component of $\mathcal{Q}$ respectively. We write that $\|\mathcal{Z}\|$ and  $\|\mathcal{W}\|$ for the absolute values of the maxima of their components respectively: { $$\|\mathcal{Z}\| :=\max_{1 \leq j \leq  \frac{\mathcal{N}}{2}} |Z_j|, \quad \|\mathcal{W}\| :=\max_{1 \leq j \leq \frac{\mathcal{N}}{2}} |W_j|.$$}
  
In a topological space, a set is called locally compact if it is the intersection of an open and a closed set. A set {is} called constructible if it is a finite union of locally compact sets. The space $\C^{n}$ is equipped with the Zariski topology, for which the {closed} sets are zeros of polynomial systems. The following basic version of Chevalley's theorem (See \cite{AlbouyKaloshin2012}. A standard reference for a much more general statement of this theorem is \cite{EGAIV}.) is a basic tool {to establish} finiteness of central/balanced configurations. 

\begin{theorem}[Chevalley]
The image of a constructible subset of $\mathbb{C}^n$ by a polynomial map $\mathcal{P}:\mathbb{C}^n \rightarrow \mathbb{C}^m$ is again constructible. 
\end{theorem}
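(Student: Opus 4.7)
The plan is to reduce the statement to the case of a single coordinate projection via a graph construction, and then to handle the projection case by Noetherian induction.

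For the graph reduction, note that the graph $\Gamma_\mathcal{P} = \{(x,y) \in \mathbb{C}^n \times \mathbb{C}^m : y = \mathcal{P}(x)\}$ is Zariski closed, being cut out by the polynomials $y_i - P_i(x)$. For any constructible $S \subset \mathbb{C}^n$, the set $\Gamma_\mathcal{P} \cap (S \times \mathbb{C}^m)$ is constructible (constructible sets form a Boolean algebra of subsets of $\mathbb{C}^{n+m}$), and $\mathcal{P}(S) = \pi_2(\Gamma_\mathcal{P} \cap (S \times \mathbb{C}^m))$, where $\pi_2 : \mathbb{C}^n \times \mathbb{C}^m \to \mathbb{C}^m$ is projection to the second factor. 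Thus it suffices to show that projections preserve constructibility, and by iterating one coordinate at a time, it suffices to treat a single projection $\pi : \mathbb{C}^{k+1} \to \mathbb{C}^k$ dropping the last coordinate.

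Next, one decomposes a constructible set $T \subset \mathbb{C}^{k+1}$ into its locally closed pieces, and decomposes these further via the irreducible components of their Zariski closures; since projection commutes with finite unions, the claim reduces to showing that $\pi(Z \setminus Z')$ is constructible for $Z \subset \mathbb{C}^{k+1}$ closed irreducible and $Z' \subsetneq Z$ closed. The key technical fact is that $\pi(Z)$ always contains a Zariski open dense subset $U$ of its closure $\overline{\pi(Z)}$. To see this, examine the ideal $I(Z) \subset \mathbb{C}[x_1,\ldots,x_k,y]$: if some $f \in I(Z)$ has nontrivial $y$-dependence, write $f = a_d(x)\, y^d + \cdots$ and set $U := \overline{\pi(Z)} \setminus \{a_d = 0\}$; over each point of $U$ the equation $f = 0$ has a nonempty finite solution set in $y$, so $U \subset \pi(Z)$. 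Otherwise $I(Z) \subset \mathbb{C}[x_1,\ldots,x_k]$, giving $Z = \overline{\pi(Z)} \times \mathbb{C}$, and $\pi(Z)$ is already closed.

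With this fiber lemma in hand, one proceeds by Noetherian induction on $\dim \overline{\pi(Z)}$, using the decomposition
\[
\pi(Z \setminus Z') \;=\; \bigl(U \setminus \pi(Z')\bigr) \;\cup\; \pi\!\left((Z \setminus Z') \cap \pi^{-1}\bigl(\overline{\pi(Z)} \setminus U\bigr)\right).
\]
The first piece is constructible inside $\overline{\pi(Z)}$. The second is the projection of a constructible subset of $\mathbb{C}^{k+1}$ lying in $\pi^{-1}(\overline{\pi(Z)} \setminus U)$, where $\overline{\pi(Z)} \setminus U$ is a proper closed subvariety of $\overline{\pi(Z)}$ of strictly smaller dimension; the inductive hypothesis applied componentwise on the irreducible decomposition of the closures involved concludes the argument. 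The main obstacle is precisely the fiber lemma together with the choice of the right inductive invariant; once these are in place, the remainder is careful bookkeeping of how the constructible operations (finite unions, intersections, complements) interact with projection.
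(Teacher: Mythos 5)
The paper offers no proof of this statement: Chevalley's theorem is quoted as a classical result with references to Albouy--Kaloshin and EGA IV, so there is nothing internal to compare against. Your architecture --- graph trick, reduction to a single coordinate projection, fiber lemma, Noetherian induction --- is the standard elimination-theoretic proof, and the reductions in your first paragraph are correct. But the two central steps each contain a genuine gap.

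First, in the fiber lemma, the inference ``over each point of $U$ the equation $f=0$ has a nonempty finite solution set in $y$, so $U\subset\pi(Z)$'' is a non sequitur: a root of $f(x_0,\cdot)$ gives a point of the hypersurface $V(f)$ over $x_0$, not necessarily a point of $Z$, since $Z$ is cut out by the whole ideal $I(Z)$ and not by $f$ alone. The conclusion is true, but it needs the algebra: one must choose $f$ with leading coefficient $a_d\notin I(\overline{\pi(Z)})$ (nonvanishing of $a_d$ as a polynomial is not even enough to make $U$ nonempty), observe that $\bar y$ is then integral over $\mathbb{C}[\overline{\pi(Z)}]_{a_d}$, so the extension of coordinate rings localized at $a_d$ is finite and injective, and invoke lying-over to produce a point of $Z$ above each point of $U$. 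This step is the actual content of the theorem and cannot be waved through with a fiber count for a single hypersurface.

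Second, your displayed decomposition of $\pi(Z\setminus Z')$ is false as an equality: a point $p\in U$ whose fiber meets both $Z'$ and $Z\setminus Z'$ lies in the left-hand side but in neither piece on the right (it is removed from $U\setminus\pi(Z')$ and does not lie over $\overline{\pi(Z)}\setminus U$). For instance $Z=\mathbb{C}^2$, $Z'=\{y=0\}$, $\pi$ the first coordinate, $U=\mathbb{C}$: the left side is $\mathbb{C}$, the right side is empty. In addition, $\pi(Z')$ is not yet known to be constructible at that point of the induction. The standard repair is to prove the fiber lemma directly for the locally closed set $C=Z\setminus Z'$ --- shrinking $U$ further to avoid $\overline{\pi(Z')}$ when $Z\to\overline{\pi(Z)}$ is generically finite, or to avoid the locus where the fiber of $Z'$ is infinite in the cylinder case $Z=\overline{\pi(Z)}\times\mathbb{C}$ --- so that $U\subset\pi(C)$, and then to use the correct identity $\pi(C)=U\cup\pi\bigl(C\cap\pi^{-1}(\overline{\pi(C)}\setminus U)\bigr)$, to which Noetherian induction on the closure of $C$ applies cleanly.
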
   

Note that a {constructible} set in $\C$ is either itself a finite set or has a finite complement. If the image of a polynomial function $\mathcal{P}:\mathbb{C}^n \rightarrow \mathbb{C}$ has a finite complement, then we call it a dominating function.
 
To show that $\mathcal{A}$ is a finite set, the idea of Albouy and Kaloshin is the following: Assume $\mathcal{A}$ consists of infinitely many points. Then at least two mutual distances $r_{ij}, r_{kl}$ take infinitely many values, so they are dominating. Consequently we can find in $\mathcal{A}$ a sequence $(\mathcal{Z}^{n}, \mathcal{W}^{n})$ along which $\max \{\|\mathcal{Z}^{n}\|, \|\mathcal{W}^{n}\|\} \to \infty$. There are strong constraints on these singular sequences, characterized by their \emph{singular diagrams}. Some of these singular diagrams can be realized only if the masses satisfy an algebraic condition.

So if there are no Albouy-Kaloshin {singular} diagrams, then we have finiteness of $\mathcal{A}$ for all masses. {If all diagrams lead to a mass condition, then we have finiteness of $\mathcal{A}$ at least for generic (indeed, Zariski-open) masses. We obtain this result even with the appearance of other diagrams,} when these can be excluded for other reasons. Further elimination of diagrams {improves} the obtained results by dropping mass conditions. In the good case, the finiteness of $\mathcal{A}$ is obtained once all these diagrams are eliminated. 

Our analysis follows this identification-elimination procedure.

\section{Singular sequences of balanced configurations} \label{S:AK-BC}   

If the algebraic variety $\mathcal{A}$ has infinitely many points, then it has one or several ends tending to infinity, which are called singularities of $\mathcal{A}$. The complexity of $\mathcal{A}$ makes it hard to study these singularities directly. Singular sequences capture much less {information}, nevertheless they are much simpler to analyze and remain sufficiently effective. They are also realized graphically by two-color diagrams, one for $\mathcal{Z}$-component and the other one for $\mathcal{W}$-component with vertices representing the particles. {In the $z$-diagram, a $z$-circle is added to the particle $i$ if $z_{i}$ is of leading order, \emph{i.e.} if $|z_{i}^{n}|$ and $\|\mathcal{Z}^{n}\|$ are of the same order as $n \to \infty$. }
A $z$-stroke between particles $i$ and $j$ is drawn if $Z_{ij}$ is of leading order. The same is applied to the $w$-diagram. As we consider the double-colored diagram, we say there is a $z$-edge between the vertices $i, j$ if they are connected only via a $z$-stroke. We say there is a $w$-edge between the vertices $i, j$ if they are connected only via a $w$-stroke. We say there is a $zw$-edge between  the vertices $i, j$ if there are both $z$ and $w$-strokes connecting them. 

Not all circled two-color single-edge diagrams correspond to our singular sequences. {For the analysis of $CC$'s,} a list of rules that these diagrams need {to satisfy} were introduced in \cite{AlbouyKaloshin2012}. Many of these rules are carried directly to {our analysis of $BC$'s}. Also, there are more rules needed to be introduced. 

Here is a remark showing a technical difference between the analysis of CC where $\eta=0$ and the {more general} case $\eta>0$: The system of equations \eqref{E:BC-ZW} admits a dilation symmetry {of the type} $(\mathcal{Z}, \mathcal{W}) \mapsto (\lambda \mathcal{Z}, \mu \mathcal{W})$ if and only if $\eta=0$. In \cite{AlbouyKaloshin2012}, this is used to normalize singular sequences for central configurations {to always have} $\|\mathcal{Z}^{n}\|=\|\mathcal{W}^{n}\|$. {\textbf{We drop the superscript $n$ in the following analysis.}} We say that $\mathcal{Z}, \mathcal{W}$ are {of equal order} in this case{, and write $\|\mathcal{Z}\|=\|\mathcal{W}\|$}. When $\eta >0$, the lack of this dilation symmetry forces us to also consider singular sequences and the corresponding singular diagrams for which $\mathcal{Z}$ and $\mathcal{W}$ are not of {equal order}. We therefore divide our analysis accordingly {for the two different cases}:
 
\begin{itemize}
	\item $\mathcal{Z},\mathcal{W}$ are of {equal} order. We set both of them 
to be of order $\ep^{-2}$,  
 in which {$\ep=\ep(n) \to 0$} {serves as an order-indicating parameter in the limiting process; }
	
	\item $\mathcal{Z}$ and $\mathcal{W}$ {are not of equal} order. We assume $\|\mathcal{Z}\|$ is of order $\ep^{-2}$ and $ \|\mathcal{W}\|/\|\mathcal{Z}\| \rightarrow 0$.

\end{itemize}

\subsection{ {{Equal-order} singular diagrams}} 

The following notations on the orders of quantities along the singular sequences {will} be frequently used: 
\be
\begin{split}
a \sim b & \Leftrightarrow \text{$\frac{a}{b} \rightarrow 1$}, \;\;
a \prec b \Leftrightarrow \text{ $\frac{a}{b} \rightarrow  0$ }, \;\;
a \succ b  \Leftrightarrow \text{ $\frac{b}{a} \rightarrow  0$ }, \\
a \preceq b &  \Leftrightarrow \text{there exists $C>0$ such that $\|a\| \leq C \|b\|$},\\
a \succeq b &  \Leftrightarrow \text{there exists $C>0$ such that $\|b\| \leq C \|a\|$}, \\
a \simeq b & \Leftrightarrow a \preceq b \;\; \&  \; \; a \succeq b.
\end{split}
\ee

We have set {$ \|\mathcal{W}\| \simeq \|\mathcal{Z}\| \sim \ep^{-2}$.} We assume $0 < \eta <1$, so the $\mathbf{S}BC$ under consideration is assumed to be not central.

{Remind that the center of masses is set at the origin.} 
\medskip

We have the following set of coloring rules on the singular diagrams:
\\

{\bf Rule 1a: } Any end of a $z$-stroke is connected further to another $z$-stroke, $w$-stroke, $z$-circle or $w$-circle. From a $z$-circled vertex there must emanate either a $z$-stroke or a $w$-stroke. In particular, if only $w$-strokes emanate from {a} vertex, then this vertex is also $w$-circled. In any diagram there exists at least one $z$-stroke and at least one $w$-stroke.

\begin{proof}
Assume there is a $z$-stroke between vertices $1, 2$ and nothing is {on vertex} $1$. This means $Z_{12}$ is of leading order, but  $z_1$ and $w_1$ are not. According to \eqref{E:BC-ZW}, these imply that $W_{12}$ is of leading order, so there is a $w$-stroke between the vertices $1, 2$. 

	
Suppose now the vertex $j$ is $z$-circled, but all emanating strokes are $w$-strokes. This implies that $z_{j}$ and the sum $\sum_{k \neq j}  m_k W_{kj}$ is of the same order, while the sum $ \sum_{k \neq j} m_k Z_{kj} $ in \eqref{E:BC-ZW} is of lower order. Then by \eqref{E:BC-ZW}, $w_{j}$ is of the same order as  $\sum_{k \neq j} m_k Z_{kj}$ and $z_{j}$. This means that vertex $j$ is also $w$-circled.
	
Finally, we assume no $w$-stroke exists in a singular diagram. By our assumption $\|\mathcal{W}\|$ is of leader order as $\|\mathcal{Z}\|$, so there must exist a $w$-circle, say around the vertex $j$. By \eqref{E:BC-ZW}, the sum $\sum_{k \neq j} m_k Z_{kj}$ and thus $z_{j}$ is also of leading order. So $j$ is $z$-circled as well. As there are no $w$-strokes in the diagram, from vertex $j$ there must emanate a $z$-stroke to, say vertex $k$. So $Z_{jk}$ is of leading order. We observe 
	\be
	\begin{split}
	z_{ij} & \simeq  \left( \sum_{k \neq i}m_k Z_{ki} - \sum_{k \neq j} m_k Z_{kj} \right), \\
		w_{ij}  & \simeq  \left( \sum_{k \neq i}m_k Z_{ki} - \sum_{k \neq j} m_k Z_{kj} \right),
	\end{split}
	\ee
so $z_{ij}$ and $w_{ij}$ are of the same order, and consequently from \eqref{E: ZW from zw}, $W_{jk}$ is of the same order as $Z_{jk}$, so there is a $w$-stroke between vertices $j$ and $k$. Contradiction. So there must exist a $w$-stroke, and by the same reasoning, also a $z$-stroke.


\end{proof}

\begin{definition}
{Two} vertices $i$ and $j$ are called \emph{$z$-close} if $z_{ij} \prec \ep^{-2}$, \emph{i.e.} {if} $z_{ij}=z_{i}-z_{j}$ is not of leading order. By linearity, this definition extends to linear combinations of vertices.
\end{definition}
\vspace{0.3 cm}

A direct consequence of this definition is

{\bf Rule 1b: } If the bodies $k$ and $l$ are $z$-close, then they are either both $z$-circled or both not  $z$-circled.

\begin{definition}
	 A subdiagram of a singular diagram is called an \emph{isolated component} if it is a disconnected subgraph of the bi-colored diagram.
\end{definition}


{\bf Rule 1c: }  The center of mass of an isolated component is both $z$- and $w$-close to the origin. 

\begin{proof}
Suppose $I \subset \{1,2,\ldots,N\}$ is an isolated component. Then
\[
\sum_{j \in I} m_j z_j = \frac{1}{1-\eta^2}  \sum_{j \in I} \sum_{ k \in I,k \neq j} m_j m_k Z_{kj} - \frac{\eta}{1-\eta^2} \sum_{j \in I} \sum_{ k \in I,k \neq j} m_j m_k W_{kj}.
\]
The $Z_{jk}$'s for $j \in I, k \notin I$ are by assumption not of leading order. When $j, k \in I$, $m_{j} m_{k} Z_{kj}$ and $m_{j} m_{k} Z_{jk}$ cancel each other. {Thus} the first summand on the right-hand side of this equation {is} not of leading order. The second summand is also not of leading order by the same reasoning. Thus 
$$\sum_{j \in I} m_j z_j  \prec \ep^{-2}.$$ 

By the same reasoning, we also have 
$$\sum_{j \in I} m_j w_j \prec \ep^{-2}.$$

\end{proof}

Rule {(1b)} and Rule {(1c)} imply the following: \\

{\bf Rule 1d: } In a singular diagram or a connected component of it, the existence of a $z$-circled vertex implies the existence of another $z$-circled vertex. The $z$-circled {vertices} can not all be $z$-close together.
\medskip

Indeed if there is only one $z$-circled vertex, or {if} all $z$-circled vertices are $z$-close to each other, then the center of mass is $z$-close to {the $z$-circled vertices}, and thus cannot be $z$-close to zero. 
 
\begin{definition} A $z$-stroke is called \emph{maximal} if its vertices are not $z$-close.
\end{definition}

{\bf Rule 1e: } An isolated component has no $z$-circled vertex if and only if it also has no maximal $z$-stroke. \\

{Indeed, no $z$-circled vertex means that all vertices are $z$-close to the origin, which is equivalent to say that there is no maximal $z$-stroke by the center of mass condition. 



\medskip

{\bf Rule 1f: } {An isolated component of a bi-colored diagram cannot be single colored. }

\smallskip
The proof essentially follows from the proof of the last statement of Rule (1a). 
\\

{\bf Rule 1g: } If exactly one single-colored stroke emanates from a vertex, then this vertex must be {double-circled.} 
\begin{proof}
Without loss of generality, we suppose there is exactly one $w$-stroke {emanating from the vertex $i$ to $j$}. By \eqref{E:BC-ZW} we have $z_{i} \simeq w_{i} \simeq W_{ji}$, which are of leading order.

\end{proof}

\vspace{0.5 cm}


The following order estimates appeared in \cite{AlbouyKaloshin2012}. {They hold in} our case as well: 

{\bf Estimate 1: }\label{Est: 1.1} For any pair $(i,j)$, one has 
$$\ep \preceq r_{ij} \preceq \ep^{-2}, \hbox{ and } \ep^2 \preceq z_{ij} \preceq \ep^{-2}.$$ 
A $zw$-stroke between $i$ and $j$ exists if and only if $r_{ij} \sim \ep$. 
\\

{\bf Estimate 2: }\label{Est: 1.2} If there is a $z$-stroke between the vertices $k$ and $l$, then 
\[
\ep \preceq r_{kl} \preceq 1,\quad \ep \preceq z_{kl} \preceq \ep^{-2}, \quad \ep^2 \preceq w_{kl} \preceq \ep. 
\]

{As in \cite{AlbouyKaloshin2012}, the} proofs of these Estimates follow from the relations
\[
Z_{ij} = z_{ij}^{-\frac{1}{2}} w_{ij}^{-\frac{3}{2}} =r_{ij}^{-3} z_{ij} \preceq \ep^{-2}, \quad W_{ij} =w_{ij}^{-\frac{1}{2}} z_{ij}^{-\frac{3}{2}} =r_{ij}^{-3} w_{ij} \preceq \ep^{-2}.
\]

It follows also from these relations that 
\begin{itemize}
\item a $w$-edge between $i$ and $j$ is \emph{maximal} if and only if ${z_{ij} \simeq \ep^{2}}$. Similarly, a $z$-edge is maximal if and only if $w_{ij} \simeq \ep^{2}$.
\item It then follows from Estimate 1 that a $zw$-edge is never maximal. 
\item If there is a stroke between the vertices $i, j$ and $w_{ij} \simeq \ep$, then the stroke is part of a $zw$-edge.
\item if there is only a $z$-stroke between the vertices $i, j$, then $w_{ij} \prec \ep$.  if there is only a $w$-stroke between the vertices $i, j$, then $z_{ij} \prec \ep$.
\item If a single-colored stroke between the vertices $i, j$ is maximal, then $r_{ij} \simeq 1$.
\item If a single-colored stroke between the vertices $i, j$ is not maximal, then $r_{ij} \prec 1$.
\end{itemize}

\medskip

{\bf Rule 2a: } Any $zw$-edge is connected to at least another $z$-stroke or a $w$-stroke.\\

\begin{proof}If not, then both ends of a $zw$-stroke are both $z$-circled and $w$-circled, by Rule (1a) and (1d). These two vertices therefore form an isolated component. {By} Rule (1e), the edge must be a maximal $zw$-stroke, which does not exist. \\
\end{proof}

{\bf Rule 2b: } If there are two consecutive $zw$-edges, there is a third $zw$-edge closing the triangle. \\

\begin{proof} Suppose the edges between vertices $1,3$ and between $3, 2$ are $zw$-edges. By Estimate 1,  
\be 
z_{12} = z_{13} + z_{32} \preceq \ep, \quad w_{12} = w_{13} + w_{32} \preceq \ep.
\ee
By Estimate 1, we also have $r_{12}^2 =z_{12}w_{12} \succeq \ep$.  Thus $z_{12},w_{12}\sim \ep$ and there must exist a $zw$-edge between the vertices $1$ and $2$.
\end{proof}

{\bf Rule 2c: } In a triangle of edges, the edges all have the same type: All $z$-edges, all $w$-edges, or all $zw$-edges.

\begin{proof} If there are only $z$-{edges} between the vertices $1, 2$ and $2, 3$, then by Estimate 2, 
\[
w_{13} = w_{12}+w_{23} \prec \ep. 
\]  
Estimate 2 now implies that no $w$-stroke between vertices $1, 3$ exists, so the {edge} between $1, 3$ is a $z$-{edge}. 

The case with two $zw$-edges follows directly from Rule (2b).
\end{proof}

{The following Rule (2d) follows directly from Rule (1g).} \\

{\bf Rule 2d: } If from a vertex there emanate exactly one $z$-{stroke} and one $w$-{stroke}, then this vertex must be either $z$-circled, or $w$-circled, or $zw$-circled. }
\begin{proof} 
Assume the vertex $j$ is neither $z$- nor $w$-circled. { Suppose there is only a $z$-{stroke} between $j, k$, and there is a $w$-{stroke} between $j, l$.} 
Then from \eqref{E:BC-ZW}, we have
\[
\begin{pmatrix} 
	1 & -\eta \\
	-\eta & 1 
	\end{pmatrix}
	 \begin{pmatrix} Z_{jk} \\
	W_{jl}
	\end{pmatrix} \prec \ep^{-2},
\]   
which implies $Z_{jk},W_{jl} \prec \ep^{-2}$ since $\eta \in (0, 1)$. A contradiction. 

{Note that the subscripts $k$ and $l$ are allowed to coincide, in which we have a $zw$-edge emanating from the vertex. }

\end{proof}

{\bf Rule 2e: } If from a vertex there only {emanates} $z$-{edges} (or only $w$-{edges}), then this vertex is either double circled or not circled at all. \\

\begin{proof} This follows from
\[
z_j \simeq \left(\sum_{k \neq j} m_k Z_{kj} \right) \simeq w_j
\]
in the case that only $z$-{edges} emanate from vertex $j$.
\end{proof}

{\bf Rule 2f: } If from both ends of a $z$-{edge} there emanate only other $z$-{edges}, then this $z$-{edge} is not maximal.
\begin{proof}
If on the contrary the $z$-{edge}, say between the vertices $1$ and $2$, is maximal, then these vertices are not $z$-close.

From \eqref{E:BC-ZW} we get
{\small
\[
\begin{split}
z_1 & \sim  \frac{1}{1-\eta^2} \left( m_2 Z_{21} + \sum_{k \neq 1,2} m_kZ_{k1} \right), \; w_1  \sim   -\frac{\eta}{1-\eta^2} \left(m_2 Z_{21} + \sum_{k \neq 1,2} m_kZ_{k1}\right), \\
z_2 & \sim  \frac{1}{1-\eta^2} \left( m_1 Z_{12} + \sum_{k \neq 1,2} m_kZ_{k2} \right), \;
w_2  \sim    -\frac{\eta}{1-\eta^2} \left( m_1 Z_{12} + \sum_{k \neq 1,2} m_kZ_{k2} \right).
\end{split}
\]	}
Thus $z_{12} \sim w_{12}$ is of leading order. But Estimate 2 asserts that the vertices $1, 2$ should be $w$-close. Contradiction. 

\end{proof}

{\bf Rule 2g: } 
If there exists a $zw$-edge in a singular diagram,  there must be another $zw$-edge.  \\

\begin{proof} Estimate 1 asserts $r_{ij} \preceq \ep$ and the equality holds only if between the vertices $i, j$ there is a $zw$-edge. By the variational {characterization} of $\mathbf{S}BC$, the force function is constant along a singular sequence. {But} if there is only one $zw$-edge, say between vertices $1, 2$, we would have
\be 
U(\mathbf{q}) = \sum_{1 \leq i <j \leq N} m_im_j r^{-1}_{ij} = m_{1} m_{2} r^{-1}_{12} + \sum_{1 \leq i <j \leq N,(i,j) \neq (1, 2)} m_im_j r^{-1}_{ij} \rightarrow \infty
\ee
along the singular sequences. This is a contradiction.
\end{proof}

In general, with the same argument, we have

{\bf Observation:} If there exist $i_0,j_0$ such that $ 0 \leftarrowtail r_{i_0 j_0} \preceq r_{ij}, 1 \leq i  \neq j \leq N$, then there exists another $i_1,j_1$ satisfying $r_{i_0 j_0} = r_{i_1 j_1}$. 

\medskip

{\bf Rule 2h: } If four edges form a quadrilateral, then the opposite {edges} are of the same type. 

\begin{proof}
Suppose the vertices $1,2,3,4$ form a quadrilateral with a $z$-{edge} between $1, 2$ and a $w$-{edge} between $2, 3$. There is {an edge}  between $3, 4$ and another {edge} between $1, 4$. We assume no {$zw$-edges} {exist} in this quadrilateral.

If between $3, 4$ there is a $w$-{edge}, then the {edge} between $1, 4$ {can} not be a $w$-{edge}, since otherwise by Estimate (1.2) we would have $z_{12}, z_{23}, z_{31}  \prec \ep$, but $z_{12}=z_{14}+z_{43}+z_{32}$ and $\ep \prec z_{12}$ for the $z$-{edge} between the vertices $1, 2$. A contradiction. Thus between $1, 4$ the {edge} must be a $z$-{edge}. 




However, in this case we have 
\be \label{E:quad-2}
z_{24} = z_{23} + z_{34} \prec \ep, \quad w_{24} = w_{21} + w_{14} \prec \ep
\ee
by Estimate 2. This implies 
\[
Z_{24} = z_{24}^{-\frac{1}{2}} w_{24}^{-\frac{3}{2}} \succ \ep^{-2},
\]
which contradicts $\|\mathcal{Z}\| \sim \ep^{-2}$. 

Therefore the {edge} between the vertices $3, 4$ must be a $z$-{edge}. Switching the role of $z$- and $w$-strokes in this argument by switching the vertices $2$ and $3$, we conclude that the {edge} between $1, 4$ is a $w$-{edge}. 


We now suppose that between $1, 2$ there is a $z$-{edge} while between $2, 3$ there is a $zw$-edge. By Estimate 2, we have $z_{23},w_{23} \sim \ep$. By Rule (2b) there cannot be a $zw$-edge between the vertices $3, 4$. If the {edge} between $3, 4$ is a $w$-{edge}, then the edge between $1, 4$ cannot be a $w$-{edge}, nor a $zw$-edge, since otherwise we have by Estimate 2 that $z_{14}, z_{43}, z_{32} \preceq \ep $ and $\ep \prec z_{12}$, which is a contradiction. So in this case the edge between $1, 4$ has to be a $z$-{edge}, but this implies again \eqref{E:quad-2} which leads to a contradiction with $\|\mathcal{Z}\| \sim \ep^{-2}$. So the edge between $3, 4$ has to be a $z$-{edge}. Since $w_{12}, w_{34} \prec \ep$, we have $w_{14} = w_{12}+w_{23}+w_{34} \sim  \ep$, and thus {between ${1, 4}$ there}  is a $zw$-edge.

By Rule (2b), there cannot exist three or more $zw$-edges in a quadrilateral. The proof is now complete.


\end{proof}

{\bf Rule 2i: } In an isolated quadrilateral with pairs of opposite edges of different types, if all the vertices are circled, then they are circled with the same type {of} circles: {they are either }all $z$-circles, or all $w$-circles, or all $zw$-circles.
\begin{proof}
We assume that there are $z$-strokes between the vertices $1, 2$, and between the vertices $3, 4$ and there are $w$-strokes between $1, 3$ and between $2, 4$. {Note that this includes all possible types of edges.} 

 {By  assumption} 
 all the vertices are circled. Suppose $1$ is only $z$-{circled}, due to Estimate 2, vertices $1$ and $2$ are $w$-close, hence the vertex $2$ can not be $w$-circled 
and hence vertex $2$ {has to be} $z$-circled. Since there is a $w$-stroke between $2, 4$, by Estimate 2, the vertices $2$ and $3$ are $z$-close. By Rule {(1b)}, the vertex $3$ is $z$-circled. 

If the vertex $3$ is {actually} $zw$-circled, as between $3, 4$ there is a $z$-stroke, the vertices $3, 4$ are $w$-close, and thus the vertex $4$ must be $w$-circled. Being $z$-close to {the vertex} $1$, the vertex $4$ is also $z$-circled. Thus the vertex $4$ is $zw$-circled.

Then we obtained in this diagram two $w$-circled vertices $3,4$ which are also $w$-close by Estimate 2. This is disallowed by Rule {(1c)}.  

Therefore, the vertex $3$ is {only} $z$-circled. Thus the vertex $4$ is also {only} $z$-circled by Estimate 2 since it is connected to $3$ only via a $z$-stroke.   

The case that $1$ is only $w$-circled is completely similar, and in that case all vertices are $w$-circled.



Suppose now the vertex $1$ is $zw$-circled. Estimate 2 implies that  vertex $2$ is $w$-circled  and vertex $4$ is $z$-circled. 
If the vertex $3$ is $w$-circled, then the vertex $4$ must be also $w$-circled since the vertices $3$ and $4$ are $w$-close, and thus actually $zw$-circled. Due to Rule {(1c)}, $2$ and $3$ must be also $z$-circled since they are $z$-close. The analysis with the assumption that vertex $3$ is $z$-circled is analogous. In conclusion, all vertices are $zw$-circled. 


\end{proof}

The following rule will be applied for $N=4$ only.  

{\bf Rule 2j: } If a 4-{vertex} diagram consists of two connected components each {consisting} of two $z$-(resp. $w$-)circled vertices connected via a $zw$-{edge}. Then these two components cannot be connected by exactly one $z$-(resp. $w$-)edge.

\begin{proof}
Suppose all {the} vertices are $z$-circled and there are $zw$-edges between $1,2$ and $3,4$ respectively. If the assertion does not hold,  without loss of generality we assume {that} {there is no other edge in the diagram but a $z$-edge between vertices $2$ and $3$.}
Since the vertex $1$ is only $z$-circled, there holds 
\be \label{E:Rule-2j-1} 
w_1 \simeq m_2 W_{21} - \eta m_2 Z_{21} \simeq W_{21} - \eta Z_{21} \prec \ep^{-2}.
\ee
But then
\[
w_2 \simeq m_1 W_{12} - \eta(m_1 Z_{12} + m_3 Z_{32}) = -\eta m_3 Z_{32} - m_1 (W_{21} - \eta Z_{21})  \sim -\eta m_3 Z_{32}.
\]
The last step follows from \eqref{E:Rule-2j-1}. This means $w_2 \simeq \ep^{-2}$, which is {however} not allowed since the vertex $2$ is not $w$-circled.

\end{proof}


{
\subsection{{Non-equal-order} singular diagrams}

{Recall we have set $\| \mathcal{W} \| \prec \|\mathcal{Z}\| \sim \varepsilon^{-2}$.}

{We first establish in this case an estimate:}
\begin{lemma}
\be \label{E:W-norm}
\|\mathcal{W}\| \succeq \ep^2.
\ee
\end{lemma}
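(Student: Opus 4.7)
The plan is to exploit the coupled structure of \eqref{E:BC-ZW}: since $\eta > 0$, a large $z$-component cannot exist in isolation from a large $w$-component. I would first invert the linear system to rewrite \eqref{E:BC-ZW} as the equivalent identities
\[
\sum_{k \neq j} m_k Z_{kj} = z_j + \eta\, w_j, \qquad \sum_{k \neq j} m_k W_{kj} = w_j + \eta\, z_j, \quad j = 1, \ldots, N,
\]
which will be the main algebraic tool in what follows.

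The first step is to rule out the existence of a $z$-circled vertex under the running assumption $\|\mathcal{W}\| \prec \|\mathcal{Z}\| \sim \ep^{-2}$. If $|z_j| \simeq \ep^{-2}$ for some $j$, then the hypothesis forces $|w_j| \prec \ep^{-2}$, so $w_j + \eta\, z_j \sim \eta\, z_j$ is of order $\ep^{-2}$; the second identity above together with the triangle inequality then produces $\max_k |W_{kj}| \succeq \ep^{-2}$, contradicting $\|\mathcal{W}\| = o(\ep^{-2})$. This step uses the hypothesis $\eta > 0$ in an essential way; the analogue fails in the CC case $\eta = 0$, which is consistent with the fact that \eqref{E:CC} decouples.

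Consequently, the order $\ep^{-2}$ in $\|\mathcal{Z}\|$ must be attained by some $|Z_{ij}|$ (after passing to a subsequence if necessary), meaning that a $z$-stroke exists between the vertices $i$ and $j$, and Estimate 2 yields the upper bound $|r_{ij}| \preceq 1$. Combining this with the identity
\[
Z_{ij}\, W_{ij} = z_{ij}^{-2}\, w_{ij}^{-2} = r_{ij}^{-4},
\]
which follows directly from \eqref{E: ZW from zw}, gives
\[
|W_{ij}| = \frac{|r_{ij}|^{-4}}{|Z_{ij}|} \simeq |r_{ij}|^{-4}\, \ep^2 \succeq \ep^2,
\]
whence $\|\mathcal{W}\| \geq |W_{ij}| \succeq \ep^2$, as claimed.

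The only delicate point is the first step: one must be careful to retain the factor of $\eta$ when passing to leading order, since it is the mechanism that transmits the largeness of $z_j$ into a largeness of some $W_{kj}$. Once the absence of $z$-circles is established, the remaining deduction is a short calculation built on the algebraic identity $Z_{ij} W_{ij} = r_{ij}^{-4}$ and the already-established Estimate 2.
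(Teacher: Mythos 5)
Your proof is correct, but it takes a longer and genuinely different route than the paper. The paper's argument is a one-line computation valid for an \emph{arbitrary} pair $(i,j)$: from \eqref{E: ZW from zw} one has $w_{ij}^{3} = 1/(z_{ij} Z_{ij}^{2})$, and since $|z_{ij}| \preceq \ep^{-2}$ and $|Z_{ij}| \preceq \ep^{-2}$ are universal upper bounds (no structural information about the diagram is needed), this immediately gives $|w_{ij}|^{3} \succeq \ep^{6}$ and hence $\|\mathcal{W}\| \succeq |w_{ij}| \succeq \ep^{2}$. You instead first prove the absence of $z$-circles (via the inverted identities $\sum_{k\neq j} m_k Z_{kj} = z_j + \eta w_j$ and $\sum_{k\neq j} m_k W_{kj} = w_j + \eta z_j$, which is exactly the argument the paper deploys \emph{after} the lemma to establish Rule 3a and \eqref{E:BC-unequal}), then locate a leading-order $Z_{ij}$, invoke Estimate 2 for $r_{ij} \preceq 1$, and finish with $Z_{ij}W_{ij} = r_{ij}^{-4}$. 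All of these steps are sound and non-circular — in particular your use of Estimate 2 is legitimate here because its derivation of $r_{ij} \preceq 1$ from $Z_{ij} \sim \ep^{-2}$ and $z_{ij} \preceq \ep^{-2}$ does not depend on the equal-order assumption. What your detour buys is a structural byproduct (no $z$-circles, existence of a leading $z$-stroke) that the paper anyway derives separately; what the paper's route buys is brevity and independence from any diagram analysis, since it needs only the trivial bounds on $z_{ij}$ and $Z_{ij}$.
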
 

\begin{proof}{This} directly follows from the estimate
\be
\|\mathcal{W}\|^3 \succeq | w_{ij}|^3 = \frac{1}{z_{ij} Z_{ij}^2} \succeq \ep^{6}.
\ee
\end{proof}

{By Equation \eqref{E:BC-ZW}, a} {non-equal-order} singular sequence $(\mathcal{Z},\mathcal{W})$ {satisfies}
\be 
\begin{split}
	\frac{z_j}{\|\mathcal{Z}\|} &= \frac{1}{1-\eta^2} \sum_{k \neq j} m_k \frac{Z_{kj}}{\|\mathcal{Z}\|} - \frac{\eta}{1-\eta^2} \frac{\|\mathcal{W}\|}{\|\mathcal{Z}\|} \sum_{k \neq j} m_k\frac{W_{kj}}{\|\mathcal{W}\|}, \\
	\frac{w_j}{\|\mathcal{W}\|} &= \frac{1}{1-\eta^2} \sum_{k \neq j} m_k \frac{W_{kj}}{\|\mathcal{W}\|} - \frac{\eta}{1-\eta^2} \frac{\|\mathcal{Z}\|}{\|\mathcal{W}\|} \sum_{k \neq j} m_k \frac{Z_{kj}}{\|\mathcal{Z}\|}.
\end{split}
\ee 
Pushing $\| \mathcal{Z}\| \sim \ep^{-2} \rightarrow \infty$ along the singular sequence, we have 
\be 
\frac{z_j}{\|\mathcal{Z}\|} \sim \frac{1}{1-\eta^2} \sum_{k \neq j} m_k \frac{Z_{kj}}{\|\mathcal{Z}\|}, \quad	 \sum_{k \neq j} m_k \frac{Z_{kj}}{\|\mathcal{Z}\|} \prec 1.
\ee  
{Thus}
\be \label{E:BC-unequal}
z_j \simeq \sum_{k \neq j} m_k Z_{kj} \prec \ep^{-2}.
\ee
{Thus only $z$-strokes in the singular diagrams are visible and there are no $z$-circles. Therefore we get the following rule:}

{\bf Rule 3a: } There is at least one $z$-{edge} in the singular diagram. At each end of a $z$-{edge} there is necessarily another $z$-{edge}. {There are no $z$-circles.}

\begin{proposition} \label{P:Est-dis}
 There is no maximal $z$-stroke. {That is}{, we have} $r_{jk} \prec 1$, if {the} vertices $i$ and $j$ are connected via a $z$-stroke. 
\end{proposition}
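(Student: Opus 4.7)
The plan is to read off the conclusion almost directly from the key identity \eqref{E:BC-unequal}, which was obtained by dividing the first equation of \eqref{E:BC-ZW} by $\|\mathcal{Z}\|$ and passing to the limit under the standing assumption $\|\mathcal{W}\|/\|\mathcal{Z}\| \to 0$. That identity yields $z_j \simeq \sum_{k \neq j} m_k Z_{kj} \prec \varepsilon^{-2}$, so every single coordinate $z_j$ is of order strictly less than $\varepsilon^{-2}$. The crucial consequence I will exploit is that for any pair of vertices $j,k$ we automatically have $z_{jk} = z_j - z_k \prec \varepsilon^{-2}$; in other words, \emph{every pair of vertices is $z$-close} in the sense of the definition preceding Rule~1b.

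From here I will argue by contradiction. Suppose there were a maximal $z$-stroke between vertices $j$ and $k$. By the very definition of maximality, $j$ and $k$ are not $z$-close, i.e.\ $z_{jk} \simeq \varepsilon^{-2}$, which directly contradicts the consequence of \eqref{E:BC-unequal} noted above. Hence the $z$-stroke cannot be maximal. Finally, since $\|\mathcal{W}\| \prec \varepsilon^{-2}$ in the non-equal-order regime, the identity $W_{ij} = r_{ij}^{-3} w_{ij}$ combined with Estimate~1 forbids any $w$-stroke from existing (a $w$-stroke would force $\|\mathcal{W}\| \simeq \varepsilon^{-2}$), so every $z$-stroke is automatically single-colored and no $zw$-edge is possible. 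I can therefore invoke the bullet point listed right after Estimate~2 --- ``if a single-colored stroke is not maximal, then $r_{ij} \prec 1$'' --- to conclude $r_{jk} \prec 1$, as claimed.

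There is essentially no obstacle here; the proposition is a short corollary of \eqref{E:BC-unequal}. The only minor point to verify cleanly is that no $zw$-edge can occur in the non-equal-order setting, so that the single-colored version of the maximality dichotomy applies. Once this observation is recorded, the argument reduces to two lines: $z_j, z_k \prec \varepsilon^{-2}$ forbids maximality, and non-maximality of a single-colored $z$-stroke forbids $r_{jk} \simeq 1$.
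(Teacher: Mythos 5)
Your proof is correct and follows essentially the same route as the paper: both arguments rest on \eqref{E:BC-unequal} (no $z$-circles, hence $z_{jk} \prec \varepsilon^{-2}$ for every pair of vertices) together with the relation $Z_{jk} = r_{jk}^{-3} z_{jk} \sim \varepsilon^{-2}$, which rules out maximality and forces $r_{jk}^{3} = z_{jk}/Z_{jk} \prec 1$. One inessential slip: a $w$-stroke only asserts $W_{ij} \simeq \|\mathcal{W}\|$, not $W_{ij} \simeq \varepsilon^{-2}$, so your parenthetical claim that a $w$-stroke would force $\|\mathcal{W}\| \simeq \varepsilon^{-2}$ is not justified --- but you do not actually need the absence of $zw$-edges, since the two-line computation above yields $r_{jk} \prec 1$ for any $z$-stroke, single-colored or not.
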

 \begin{proof}
{Suppose there is a $z$-stroke between the vertices $i$ and $j$, so $Z_{ij}$ is of leading order.} By {assumption} $\ep^{-2} \sim Z_{ij} = r_{ij}^{-3} z_{ij}$, we have $r_{ij} \prec 1$  since $z_{ij} = z_i - z_j \prec \ep^{-2}$,  {as {there are no $z$-circles} in the diagram}. 
\end{proof}

 }

\section{Singular Diagrams of balanced configuration for $N=4$} \label{S:N=4}

In this section, we identify possible singular diagrams for $4$ bodies in the plane.

\subsection{Equal-order singular diagrams} 
 As in \cite{AlbouyKaloshin2012}, by a bi-colored vertex we mean a vertex from which there {emanates} at least one $z$-stroke and one $w$-stroke. We define $C$ as the maximal number of strokes from a {bi-colored vertex}. Clearly $2 \leq C \leq 6$.


\subsubsection{No bi-colored vertex} 

Assume no bi-colored vertex exists. By Rule (1a), there must exist a $z$-stroke between, say, vertices $1, 2$ and a $w$-stroke, which must be between the vertices $3, 4$. By Rule (1a, 1f), the vertices $1, 2$ are double-circled. By Rule (1e), the $z$-stroke between the vertices $1, 2$ is maximal. Thus $z_{12} \sim \ep^{-2}$ and $w_{12} \sim \ep^{2}$. Hence the $w$-circled vertices $1,2$ are $w$-close. But this {contradicts} Rule (1c) since $1, 2$ {consist} an isolated component. Therefore a possible diagram {must have} at least one bi-colored vertex.

\subsubsection{$\mathbf{C=2}$}

Suppose there is a $zw$-edge between vertices $1$ and $2$. Rule (2g) asserts the existence of another $zw$-edge, which must be between $3$ and $4$, and there are no other strokes. Thus the vertices $1, 2$ form an isolated component, which contradicts Rule (2a).

Suppose between vertices $1, 2$ there is a $z$-{edge} and between vertices $2, 3$ there is a $w$-{edge}. By Rule (2c), there is no stroke between the vertices $1$ and $3$. The vertex $4$ can be only connected to $1$ or $3$ via a single-colored edge due to the assumption $C=2$.

Suppose the vertex $4$ is connected to both vertices $1$ and $4$. By Rule (2h), the edge between $3, 4$ is a $z$-{edge} and the edge between $1, 4$ is a $w$-{edge}.  
By Rule (2d), all vertices must be circled. With Rule (2i) we have 
\begin{itemize}
	\item Singular Diagram I(a,b): A quadrilateral with a pair of opposite $z$-edges and a pair of opposite $w$-edges. All vertices are $w$-circled called (I(a)) or $z$-circled (I(b)).
	\item Singular Diagram II: A quadrilateral with a pair of opposite $z$-edges and a pair of opposite $w$-edges. All vertices are $zw$-circled.
\end{itemize}
Now assume there is no {stroke} emanating from the vertex $4$. So the vertices $1,2,3$ form an isolated {triangle}. Due to Rule (1g) the vertices $1$ and $3$ are $zw$-circled. Then the vertex $2$ is $w$-close to vertex $1$ and $z$-close to vertex $3$, implying that it is also $zw$-circled by Rule (1b). Therefore, we have 
\begin{itemize}
	\item {Singular Diagram III:} A ${\rm V}$-shape diagram and an isolated vertex. The vertices $1, 2, 3$ are $zw$-circled, and are connected in a ${\rm V}$-shape isolated component with {a} $z$-{edge} between vertices $1, 2$ and {a} $w$-{edge} between vertices $2, 3$. The isolated vertex $4$ is not circled. 
\end{itemize}

If there is an edge between vertices $3$ and $4$, and there is no edge between the vertices $1, 4$, we claim that the edge between $3, 4$ must be a $z$-{edge}.

Suppose this is not true{, and the }vertices $3$ and $4$ are connected via a $w$-{edge}. By Rule (1g), the vertices $1$ and $4$ are both $zw$-circled. Since the vertices $2, 3$ and $3, 4$ are connected only via $w$-{edge}s, they are $z$-close {to each other. T}hus $2,3$ are $z$-circled. Since the vertices $1$ and $2$ are $w$-close, $2$ is also $w$-circled. Moreover, due to Rule (2e), the vertex $3$ is also $w$-circled. So all vertices are $zw$-circled.

We claim that the $w$-stroke between vertices $3$ and $4$ is not maximal, that is, $r_{34} \prec 1$. Otherwise we would have $w_{34}\simeq  \ep^{-2}, z_{34} \simeq \ep^2$ on one hand, and due to \eqref{E:BC-ZW}, 
$$z_{34}  \simeq m_2 W_{23} + m_4 W_{43} - m_3 W_{34} \simeq w_{34} $$
 on the other hand.

Therefore, by Rule (1e), the $w$-stroke between the vertices $2, 3$ is maximal: $w_{23} \simeq  \ep^{-2}$, which implies  $r_{23} \simeq 1$. By Rule (1e) again, the $z$-stroke between the vertices $1, 2$ is also maximal, so $z_{12} \simeq \ep^{-2}$, implying $r_{12} \simeq 1$.  

Since the vertices $2,3,4$ are $z$-close{, we} have $z_{13},z_{14} \simeq \ep^{-2}$. On the other hand, since the vertices $1,2$ are $w$-close, the vertices $3,4$ are $w$-close and $w_{23} \simeq  \ep^{-2}$, we have $w_{13},w_{14},w_{24} \simeq \ep^{-2}$, which in turn imply $r_{13},r_{14} \simeq \ep^{-2}$.  By Estimate 1, we have $z_{24} \succeq \ep^{2}$. Thus $r_{24} \succeq 1$. In conclusion, we have the following estimates on the mutual distances
\[
r_{34} \prec 1, r_{23},r_{12} \simeq 1, r_{13}, r_{14} \simeq \ep^{-2}, r_{24} \succeq 1,
\]
in which only $r_{34} \to 0$. This is disallowed by Rule (2g). So we get the claim that {the edge} between the vertices $3$ and $4$ {is a $z$-edge}.

By Rule (1g), {the} vertices $1$ and $4$ are both $zw$-circled. The vertex $2$ is $w$-close to $1$ and the vertex $3$ is $w$-closed to $4$. Therefore $2, 3$ are  $w$-circled. Moreover, since $2$ and $3$ are $z$-close, they are both either $w$-circled or $zw$-circled. We get the following possible singular diagrams:
\begin{itemize}
\item Singular diagram IV(a,b): $\Pi$-shape diagram. Vertex $1$ is connected via a $z$-{edge} to vertex $2$, Vertex $2$ is connected via a $w$-{edge} to vertex $3$, vertex $3$ is connected via a $z$-{edge} to vertex $4$. Vertices $1, 4$ are {$zw$-}circled. Vertices $2, 3$ are $w$-circled. This is called IV(a);

Vertex $1$ is connected via a $w$-{edge} to vertex $2$, Vertex $2$ is connected via a $z$-{edge} to vertex $3$, vertex $3$ is connected via a $w$-{edge} to vertex $4$. Vertices $1, 4$ are {$zw$}-circled. Vertices $2, 3$ are $z$-circled. This is called IV(b).

	\item Singular diagram V(a,b):  $\Pi$-shape diagram. No $zw$-edges. Adjacent edges are of different types. All vertices are $zw$-circled. {The diagram with two maximal $z$-strokes is called V-1(a). The diagram with only {one} maximal $z$-stroke is called V-2(a).  
The diagram with two $w$-strokes are called V-1(b) and V-2(b) respectively. }
\end{itemize}

\subsubsection{$\mathbf{C=3}$}
The vertex with three strokes may either be connected to  {three} other vertices via single-color edges, or connected to one of them via a single-colored edge and to another via a $zw$-edge. We consider the following two representative cases:
\begin{enumerate}
	\item The vertex $3$ is connected to vertices $2, 4$ via $z$-{edges}, and to vertex $1$ via a $w$-{edge}. 
	\item  There is a $zw$-edge connecting $1$ and $2$, while there is another $z$-{edge} between the vertices $2$ and $3$.
\end{enumerate}
\vspace{0.3 cm}

For Case (1), by Rule (2c) there are no edges between the vertices $1, 4$. If there is an edge between $2, 4$, then by Rule (2c) this must be a $z$-stroke. By Rule (1g), the vertex $1$ is  $zw$-circled. Thus the vertex $3$ is $z$-circled. The vertices $2, 3, 4$ are all $w$-close to each other, thus by Rule (1d) all of them are $w$-circled, and the $w$-stroke between $1, 3$ is maximal.  Therefore the $z$-circled vertices $1,3$ are $z$-close. By Rule (2f), the vertices $2,4$ are $z$-close. By Rule (1d), the vertices $2$ and $4$ must both be $z$-cricled. By Rule (1e), the $z$-strokes between $2, 3$ and between $3, 4$ are maximal.
Thus
$$r_{13},r_{34},r_{32} \simeq 1.$$
By Rule (2f), the vertices $2, 4$ are $z$-close to each other and thus 
$$r_{24} \prec 1.$$
By Rule (1d) all of the vertices are $z$-circled. 

We have  
$$z_{13}\prec \ep, z_{23},z_{34} \simeq \ep^{-2}, w_{13} \simeq \ep^{-2}, w_{23},w_{34}  \prec \ep,$$
thus
$$r_{12} =(z_{12}w_{12})^{1/2} \simeq \ep^{-2}, r_{14} = (z_{14}w_{14})^{1/2} \simeq \ep^{-2}.$$

Since only $r_{24 } \to 0$ among all {$r_{ij}\, 's$}, this is disallowed by Rule (2g).

If the vertices $2$ and $4$ are not connected by any stroke, then by Rule (1g), the vertices $1,2,4$ are all $zw$-circled. Since $3$ is $z$-close and $w$-close to  $zw$-circles, it is also $zw$-circled. This gives

\begin{enumerate}
\item Singular diagram VI(a): The vertex $3$ is connected to vertices $2, 4$ via $z$-{edges}, and to vertex $1$ via {a} $w$-{edge}. There {are} no other strokes {in the diagram}. All the vertices are $zw$-circled.
\item {Singular diagram VI(b): {The diagram obtained by exchanging} the colors of strokes {and circles} from Singular diagram VI(a).}
\end{enumerate}




For Case (2), we have
\[
z_{12} \simeq w_{12} \simeq \ep, \quad w_{23} \prec \ep. 
\]
By Rule (2c), the vertices $1$ and $3$ are not connected via any edge. 
Rule (2g) implies the existence of another $zw$-edge, which can only be between the vertices $3, 4$. All the vertices  are $w$-close to each other.
By Rule (1c), there are no $w$-circles in the diagram. By Rule (2j), there must be a stroke connecting vertices $1$ and $4$. Moreover, by Rule (2h), if there is {an edge }between the vertices $1$ and $4$, then it must be a $z$-{edge}. {We obtain:}

\begin{enumerate}
\item Singular Diagram VII(a,b): Quadrilateral {diagrams}. A pair of opposite edges are $zw$-edges. Then (a) the other pair of opposite sides are $z$-{edges}. All the vertices are $z$-circled; or (b) the other pair of opposite sides are $w$-{edges}. All the vertices are $w$-circled. 
\end{enumerate}

\subsubsection{ $\mathbf{C=4}$}

Suppose the vertex $3$ is a bicolored vertex with $4$-strokes. Then we have the following possibilities:
\begin{enumerate}
	\item There are $zw$-edges between the vertices $1, 3$ and between the vertices $2, 3$;
	\item There is a $zw$-edge between the vertices $1, 3$. Between the vertices $2, 3$ there is a $z$-{edge}. Between the vertices $3,4$ there is a $w$-{edge}. 
	\item There is a $zw$-edge between the vertices $1, 3$. Between the vertices $2, 3$ and $3, 4$ there are single-colored {edge}s of the same color. 
\end{enumerate}

For (1), Rule (2b) implies that there is a $zw$-edge between the vertices $1, 2$. Since there are no maximal strokes, by Rule (1e) there are no circles at the vertices $1, 2, 3$. The vertex $4$ is isolated, which is not circled by Rule (1e). 
We have
\begin{enumerate}
\item Singular diagram VIII: Three vertices form a triangle connected by $zw$-edges. The other vertex is isolated. No circles.
 \end{enumerate}

For (2), by Rule (2c), there are no edges between the vertices $1, 2, 4$. This is in contradiction with Rule (2g) and therefore no diagram in this case.



For (3), Rule (2g) asserts the existence of another $zw$-edge. However this cannot be between any pairs of the vertices $1, 2, 4$ by Rule (2c). So no diagram exists in this case as well.

\subsubsection{ $\mathbf{C=5}$}

Suppose there are $5$ strokes emanating from the vertex $3$. Without loss of generality we assume that between the vertices $1, 3$ and the vertices $2, 3$ there are  $zw$-edges, and there is a $z$-stroke between the vertices $3, 4$. By Rule (2b), there must exist  a $zw$-edge between the vertices $1, 2$ closing the triangle with vertices $1, 2, 3$. Rule (1a) implies that there is either another stroke from the vertex $4$, or vertex $4$ is circled. 

We claim that there is no {edge} connecting vertex $4$ to vertex $1$ or $2$. To see this, without loss of generality we assume that there is an edge between the vertices $1$ and $4$. This cannot be a $zw$-edge by Rule (2b) and the assumption $C=5$. A single-colored stroke connecting $4$ to $1$ or $2$ will result in a triangle of edges with different {types} of sides, which is disallowed by Rule (2c).

Thus vertex $4$ must be circled, and indeed {$zw$}-circled by Rule (1g). All vertices are connected by $z$-strokes, and the only $w$-strokes are within $zw$-edges, which are never maximal. Thus all vertices should be $w$-circled by Rule (1b). But this contradicts Rule (1d). 

Therefore, {no singular diagram for $C=5$}.

\subsubsection{$\mathbf{C=6}$}

{In this case, $6$ strokes emanate from, say, vertex $1$. Therefore, between the vertices $1, 2$, the vertices $1, 3$ and the vertices $1, 4$ there are $zw$-edges. By Rule (2b), there are also $zw$-edges between other vertices and there are no maximal strokes. By Rule (1e), there are no circles. We get 

\begin{enumerate}
\item {Singular diagram IX: The fully-edged diagram. Any pairs of vertices are connected by $zw$-edges. There are no-circles.}
 \end{enumerate}

\begin{figure}[htbp]
	\centering
	\includegraphics[scale=0.5]{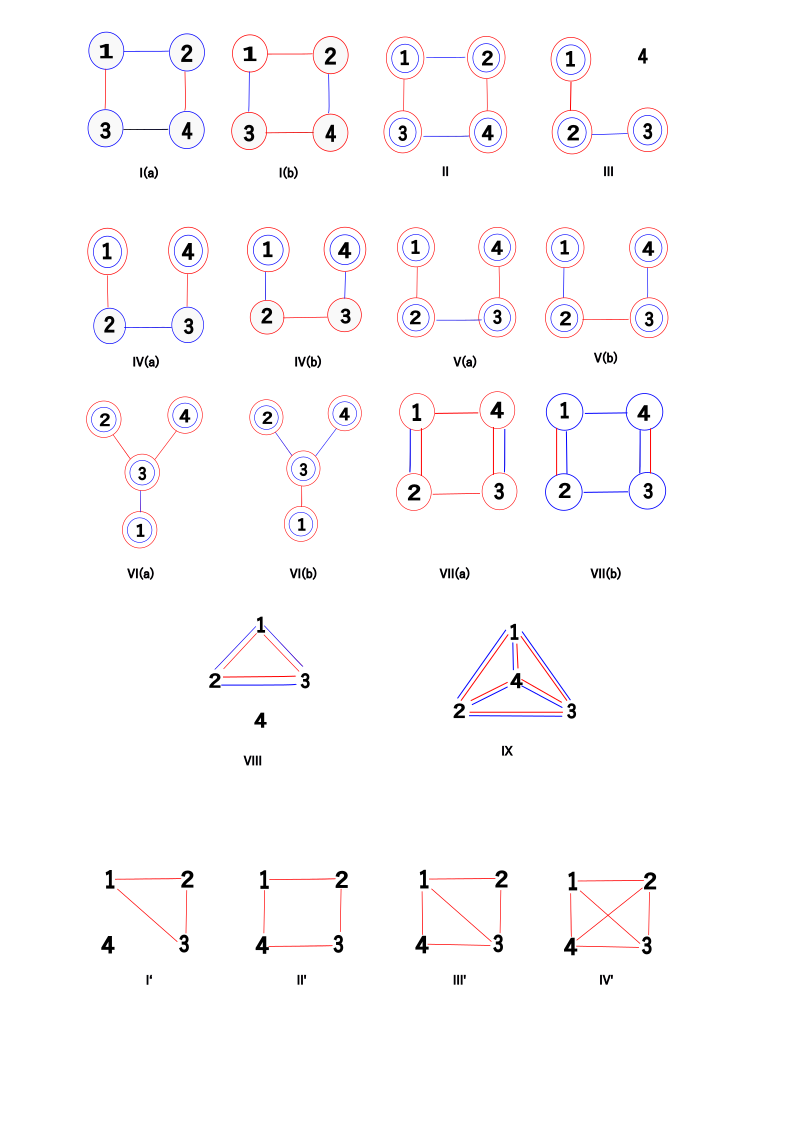}
	\caption{Singular Diagrams}
	\label{figure}
\end{figure}

\subsection{{Non-equal-order} singular diagrams}

Now we analyze {non-equal-order} singular diagrams. {Again we denote} {by} $C$ the maximal number of strokes emanating from a vertex. Due to Rule (3a), we {have} $2 \leq C \leq 3$. 

\subsubsection{$C=2$} If the vertices $1,2$ are connected via a $z$-{edge}, {then by Rule (3a) there is a $z$-{edge} between, say, vertices $2$ and $3$. There must be some $z$-{edges} emanating from $3$. When the vertex $3$ is connected to $1$ via a $z$-{edge}, there can be no other $z$-{edges} in the diagram.  We get {the} singular diagram $I^\prime$. If vertex $3$ is connected to vertex $4$, then by Rule (3a) there must be a $z$-{edge} between the vertices $4$ and $1$. We get {the} singular diagram $II^\prime$.}

\subsection{$C=3$} {We continue $II^\prime$. If the only addition is a $z$-{edge} between, say, the vertices $1, 3$, we get {the} singular diagram $III^\prime$. Otherwise we get the fully-edged diagram, which is the singular diagram $IV^\prime$.}

\section{{Remaining singular diagrams and their corresponding mass conditions}} \label{S:mass}

{Several singular diagrams remain after a systematic exclusion made in Section \ref{S:N=4} when $\mathcal{Z}$ and $\mathcal{W}$ are of equal order.} In this section, we identify explicit mass conditions for each of these diagrams, except for the last singular diagram IX.

{We begin with stating an identity that must be satisfied for $\mathbf{S}$-balanced configurations where $\eta \neq 0$.} Denote by 
\[
f_k = \sum_{j \neq k} m_j r_{jk}^{-3} (q_j - q_k).
\]
By \eqref{E:BC-1}, we have 
\[
\bigg(\begin{pmatrix}
	1+ \eta & 0 \\
	0 & 1- \eta 
\end{pmatrix} q_j \bigg) \wedge q_j = f_j \wedge q_j = \sum_{k \neq j} m_k r_{kj}^{-3} q_k \wedge q_j.
\]
A bi-vector of the form $q_i\wedge q_j$ can be represented by an anti-symmetric $2\times 2$ matrix after choosing an orthonormal basis in the plane{,} which can be subsequently identified to a scalar by choosing the upper-right element. With this in mind we compute 
\[
\bigg(\begin{pmatrix}
	1+ \eta & 0 \\
	0 & 1- \eta 
\end{pmatrix} q_j \bigg) \wedge q_j = 2 \eta x_j y_j = \frac{\eta}{2\sqrt{-1}} \left(z_j^2 -w_j^2 \right).
\]
By summarizing all these items from $1$ to $N$, we {have}
\[
\begin{split}
\sum_{j=1}^n m_j \bigg(\begin{pmatrix}
	1+ \eta & 0 \\
	0 & 1- \eta 
\end{pmatrix} q_j \bigg) \wedge q_j  & =  \sum_{j=1}^n m_j f_j \wedge q_j \\
& = \sum_{j=1}^n \sum_{k \neq j} m_k m_j r_{kj}^{-3} q_k \wedge q_j=0,
\end{split}
\]
{so} we get the {identity}
\be \label{E:zw-weight}
\sum_{j=1}^n m_j (z_j^2 - w_j^2) =0.
\ee

\subsection{$C=2$}

{The remaining singular diagrams are I(a,b), II, III, IV(a,b), V-1(a,b), V-2(a,b).}

\subsubsection{Singular diagram I} We {only} consider singular diagram I(a) {here.} The mass condition is the same for singular diagram I(b) by an analogous argument.

Since all the vertices are $w$-circled only, we have
\be \label{E:I-a}  
\begin{split}
      m_2 Z_{21} &  - \eta m_4 W_{41}  \sim (1-\eta^2)z_1 \prec \ep^{-2}, \\
	  m_1 Z_{12} &  -  \eta m_3 W_{32}  \sim (1-\eta^2)z_2 \prec \ep^{-2}, \\
	   m_4 Z_{43} &  - \eta m_2 W_{23}  \sim (1-\eta^2)z_3 \prec \ep^{-2}, \\
	  m_3 Z_{34} &  -  \eta m_1 W_{14}  \sim (1-\eta^2) z_4 \prec \ep^{-2}. \\
	 \end{split} 
\ee
Eliminating $Z_{12}$, we get
\be \label{E:Singular-1-a}
 m_1 m_4 W_{41} \sim  -m_2 m_3 W_{32}.
\ee
The vertices $1$ and $2$ are $w$-close, as well as vertices $3$ and $4$. By Rule (1c), we have  
\[
(m_1 + m_2) w_1 +  (m_3+m_4) w_3  \prec \ep^{-2}. 
\]
In the diagram, the vertex $1$ is $w$-circled with a $w$-edge to vertex $4$ and a $z$-edge to vertex $2$. By Equation \eqref{E:BC-ZW}, using \eqref{E:I-a}, we have 
\[
 w_1 \sim  \frac{1}{1-\eta^2} \left(m_4 W_{41} - \eta m_2 Z_{21}\right)  \sim m_4 W_{41}, 
\]
as well as 
\[
 w_3 \sim \frac{1}{1-\eta^2} \left(m_2 W_{23} - \eta m_4 Z_{43} \right) \sim m_2 W_{23}.
\]
{Thus} we get
\be \label{E:Singular-1-b}
(m_1+m_2) m_4 W_{41} \sim  (m_3+m_4) m_2 W_{32}. 
\ee
{Equations} \eqref{E:Singular-1-a} and \eqref{E:Singular-1-b} imply
\be \label{E:mass-I-1}
m_3(m_1+m_2) + (m_3+m_4)m_1 = 0.
\ee
Moreover, due to \eqref{E:zw-weight}, {we have}
\[
(m_1+m_2) w_1^2 \sim - (m_3+m_4) w_3^2.
\]
Thus
\[
(m_1+m_2)m_4^2 W_{41}^2 \sim - (m_3+m_4) m_2^2 W_{23}^2.
\]
Squaring \eqref{E:Singular-1-a}, we have  
\[
m_1^2 m_4^2 W_{41}^2 \sim m_2^2 m_3^2 W_{23}^2.
\]
Thus
\[
(m_1+m_2)m_3^2 + m_1^2(m_3+m_4) = 0.
\]
In conclusion, we get the mass condition 
\be \label{E:mass-I} 
\begin{cases}
 m_3(m_1+m_2) + (m_3+m_4)m_1 = 0, \\
 (m_1+m_2)m_3^2 + m_1^2(m_3+m_4) = 0.
\end{cases}
\ee

\subsubsection{Singular diagram II} The vertices $1,3$ are $w$-close, as well as vertices $2,4$. The vertices $1,2$ are $z$-close, as well as vertices $3,4$. Hence there exists $\ep^{-2} \simeq z_0,w_0 \in \mathbb{C}$ such that 
\be \label{E:II-A-1}
\begin{split}
w_1,w_3 & \sim -(m_2+m_4)w_0, \quad w_2,w_4 \sim (m_1+m_3) w_0, \\
z_1,z_2 & \sim (m_3 +m_4) z_0, \quad z_3,z_4 \sim -(m_1 + m_2) z_0.
\end{split}
\ee
Since the $zw$-circled vertex $1$ is connected to vertex $3$ via a $z$-edge and to vertex $2$ via a $w$-edge, by Equation \eqref{E:BC-ZW} we get
\be
\begin{split}
z_1 & \sim  \frac{1}{1-\eta^2} m_3 Z_{31} - \frac{\eta}{1-\eta^2} m_2 W_{21}, \\
w_1  & \sim  \frac{1}{1-\eta^2} m_2 W_{21} - \frac{\eta}{1-\eta^2} m_3 Z_{31}.
\end{split}
\ee
Eliminating $W_{21}$ we get 
\be  \label{E:II-A}
z_1 + \eta w_1 \sim m_3 Z_{31}.
\ee
Similarly, we have 
\be  \label{E:II-B}
z_3 + \eta w_3 \sim m_1 Z_{13}.
\ee
Combining \eqref{E:II-A} and \eqref{E:II-B}, we have
\[
m_1(z_1 + \eta w_1) + m_3 (z_3 + \eta w_3) \prec \ep^{-2}.
\]
This together with \eqref{E:II-A-1} imply
\be \label{E:sigma-1}
(m_1m_4 -m_2m_3) z_0 \sim \eta (m_1+m_3)(m_2+m_4) w_0.
\ee
Applying an analogous argument to the vertices $3$ and $4$, we get
\be \label{E:sigma-2}
(m_1 m_4-m_2m_3) w_0 \sim \eta (m_1+m_2)(m_3+m_4) z_0.
\ee
Combining \eqref{E:sigma-1} and \eqref{E:sigma-2}, we get the mass condition 
\be \label{E:mass-II}
(m_1m_4-m_2m_3)^2 = \eta^2 (m_1+m_2)(m_3+m_4)(m_1+m_3)(m_2+m_4).
\ee
We note that {Identity} \eqref{E:zw-weight} leads to the same mass condition \eqref{E:mass-II}.

\subsubsection{Singular diagram III}
In this singular diagram, vertices $1,2$ are $w$-close, vertices $2$ and $3$ are $z$-close. Thus there exists $ \ep^{-2} \simeq z_0,w_0 \in \mathbb{C}$ such that 
\[
\begin{split}
	z_2,z_3 & \sim m_1 z_0, \quad z_1 \sim -(m_2 +m_3)z_0{,}\\
	w_3 & \sim -(m_1 + m_2) w_0, \quad w_1,w_2 \sim m_3 w_0{.}
\end{split}
\]
From the diagram and Equation \eqref{E:BC-ZW}, we get   
\[
w_1  \sim -\frac{\eta}{1-\eta^2} m_2 Z_{21}, \quad  z_1 \sim \frac{1}{1-\eta^2} m_2 Z_{21}.
\]
Thus $w_1 \sim -\eta z_1$ and 
\be \label{E:III-1} 
m_3 w_0 \sim \eta (m_2 + m_3) z_0.
\ee 
Again from the diagram and Equation \eqref{E:BC-ZW}, we have
\[
w_3  \sim \frac{1}{1-\eta^2} m_2 W_{23}, \quad  z_3 \sim -\frac{\eta}{1-\eta^2} m_2 W_{23}.
\]
Thus $z_3 \sim  -\eta w_3$ and 
\be \label{E:III-2}
m_1 z_0 \sim  \eta(m_1+m_2) w_0.
\ee
Combining equations \eqref{E:III-1} and \eqref{E:III-2}, we have 
\be \label{E:mass-III}
\eta^2 (m_1 + m_2)(m_2 + m_3) = m_1m_3.
\ee
{Identity} \eqref{E:zw-weight} leads to the same mass condition.

\subsubsection{Singular diagram IV}

In this diagram, the only $z$-circled vertices are $1$ and $4$. Thus {by Rule (1c)} {we have}
\[
m_1 z_1 + m_4 z_4 \prec \ep^{-2},
\]
and there exists $\ep^{-2} {\simeq} z_0 \in \mathbb{C}$, {in which $z_{0}$ represents a sequence}, such that 
\be \label{E:IV-A} 
z_1 \sim -m_4 z_0, \quad z_4 \sim m_1 z_0.
\ee
Moreover, since the vertices $1$ and $2$ {are connected {via a $z$-edge}, using Equation \eqref{E:BC-ZW}} we have  
\be \label{E:IV-zw} 
z_1 \sim \frac{1}{1-\eta^2} m_2 Z_{21}, \quad w_1\sim -\frac{\eta}{1-\eta^2} m_2 Z_{21}.  
\ee
{Thus} $w_1 \sim -\eta z_1$. Similarly, $w_4 \sim -\eta z_4$. 

Since the vertices $1$ and $2$ are $w$-close, as well as vertices $3$ and $4$,  {by Rule (1c) we have}
\[
(m_1+m_2) w_1 + (m_3 + m_4) w_4 \prec \ep^{-2}{.}
\]
Thus we obtain
\be \label{E:IV-B}
(m_1+m_2) z_1 + (m_3 + m_4) z_4 \prec \ep^{-2}{.}
\ee
{Combining} \eqref{E:IV-A} and \eqref{E:IV-B}, we have $(m_1 + m_2) m_4 = (m_4+m_3) m_1$ {and thus}
\be 
 {m_1 m_3=m_2 m_4.}
\ee
{On the other hand, using the identity \eqref{E:zw-weight} we get} 
\be 
m_1z_1^2 + m_4 z_4^2 \sim (m_1+m_2) w_1^2 + (m_3+m_4) w_4^2 \sim \eta^2 \left( (m_1+m_2) z_1^2 + (m_3+m_4) z_4^2 \right). 
\ee
Then by \eqref{E:IV-A} {again} we obtain
\be
m_1 m_4^2 + m_4 m_1^2 = \eta^2 \left((m_1+m_2)m_4^2 + (m_3+m_4)m_1^2\right). 
\ee
In summary, the singular diagram IV leads to the co-dimension 2 mass condition 
\be \label{E:mass-IV}
\begin{cases}
 m_2 m_4  =m_3 m_1, \\
 m_1 m_4^2 + m_4 m_1^2 = \eta^2 \left((m_1+m_2)m_4^2 + (m_3+m_4)m_1^2\right).
\end{cases}
\ee

\subsubsection{Singular diagram V}


The vertices $1$ and $2$ are $w$-close, as well as vertices $3$ and $4$. As the center of mass is {put} at the origin, there exists $w_0   \in \mathbb{C}, w_{0} \simeq  \ep^{-2}$ such that  
\[
w_1,w_2 \sim (m_3 + m_4) w_0,\quad w_3,w_4 \sim-(m_1+m_2) w_0.
\]  
Applying the same argument leading to \eqref{E:IV-zw} to the pairs of vertices $1, 2$ and $3, 4$ respectively, we obtain the relations
\be \label{R: zz}
z_1 \sim -\frac{1}{\eta} w_1 \sim  -\frac{1}{\eta} (m_3 + m_4) w_0, \quad z_4 = -\frac{1}{\eta} w_4 \sim  \frac{1}{\eta} (m_1 + m_2) w_0.
\ee

{By Rule (1e), there exists maximal $z$-stroke and maximal $w$-stroke in the diagram. Thus the $w$-edge between the vertices $2, 3$ is maximal. Therefore }
\[
w_{14} =w_{12}+w_{23}+w_{34} \simeq \ep^{-2}. 
\]
Using Equation \eqref{E:BC-ZW}, we have
\[
z_{14} = z_1 -z_4  = \frac{1}{1-\eta^2} \left(m_2 Z_{21} - m_3 Z_{34} \right) \sim -\frac{1}{\eta} w_{14} \simeq \ep^{-2},
\]
so the vertices $1$ and $4$ are not $z$-close.

We consider the following cases: (1) only the $z$-{edge} between $1, 2$ is maximal; (2) only the $w$-{edge} between $3, 4$ is maximal; (3) both $z$-{edges} are maximal.

{In case (1), the vertices $3, 4$ are $z$-close. The vertices $2, 3$ are also $z$-close since the $w$-stroke between them is maximal. By Rule (1c) we have}
\[
m_1 z_1 + (m_2 + m_3 + m_4) z_4  \prec \ep^{-2}.
\]
{This together with \eqref{R: zz} implies}
\be \label{E:mass-V-1-a}
m_1(m_3+m_4)=(m_2 + m_3 +m_4)(m_1 + m_2).
\ee
From \eqref{E:zw-weight}, we get 
\be 
\begin{split}
& \eta^2(m_3+m_4)(m_1+m_2)(m_1+m_2+m_3+m_4) \\
&= m_1(m_3+m_4)^2 + (m_2+m_3+m_4)(m_1+m_2)^2.
\end{split}
\ee
Let $M:=m_1+m_2+m_3+m_4$ be the total mass. Combining them, we obtain the mass condition {for case (1){:}}{\footnotesize
\be \label{E:mass-V-1}
\begin{cases}
m_1(m_3+m_4) =(m_2 + m_3 +m_4)(m_1 + m_2), \\
\eta^2(m_3+m_4) (m_1+m_2) M = m_1(m_3+m_4)^2 + (m_2+m_3+m_4)(m_1+m_2)^2.
\end{cases}
\ee
}

{For case (2) we permute the vertices $1, 4$ and $2, 3$ from (1). We obtain
{\footnotesize	
\be \label{E:mass-V-3}
\begin{cases}
m_4(m_1+m_2)=( m_1 + m_2 + m_3 )(m_3 + m_4), \\
\eta^2(m_1+m_2)(m_3+m_4)M = m_4(m_1+m_2)^2 + (m_1 + m_2 + m_3)(m_2+m_4)^2.
\end{cases}
\ee
}

Consider now the case (3). {As the vertices $2, 3$ are $z$-close,  we may write $z_2,z_3 \sim z_0$, for  $z_{0} \simeq \ep^{-2}$.} From the $w$-diagram, there exists $w_0 \simeq \ep^{-2}$ such that 
\[
w_1,w_2 \sim -(m_3 + m_4) w_0, \quad w_3,w_4 \sim (m_1 + m_2) w_0.
\]
{As the vertices $1$ and $4$ are $zw$-circled, by \eqref{E:BC-ZW}} we have
\[
z_1 \sim  \frac{1}{1-\eta^2} m_2 Z_{21} \sim -\frac{1}{\eta} w_1 \sim  \frac{1}{\eta} (m_3 + m_4) w_0
\]
and 
\[
z_4 \sim \frac{1}{1-\eta^2} m_3 Z_{34} \sim -\frac{1}{\eta} w_4 \sim  -\frac{1}{\eta} (m_1 + m_2) w_0.
\]
{By Rule (1c), we have}
\[
m_1 z_1 + m_4 z_4 + (m_2 + m_3) z_0  \prec \ep^{-2},
\]
which implies 
\be 
z_0 \sim -\frac{1}{\eta(m_2 + m_3)} \left( m_1 m_3 - m_2 m_4 \right) w_0.
\ee
{Combining these relations with \eqref{E:zw-weight},} we get the corresponding mass condition
\be \label{E:mass-V-2}
\begin{split}
&  m_1(m_3+m_4)^2 + m_4(m_1+m_2)^2 + \frac{(m_1m_3 -m_2 m_4)^2}{m_2 + m_3}  \\
& = \eta^2 \left((m_1+m_2)(m_3+m_4)^2 + (m_3+ m_4)(m_1+m_2)^2 \right). 
\end{split}
\ee

\subsection{$C=3$} 

For $C=3$, we have two types of diagrams VI(a,b), VII(a,b).

\subsubsection{Singular diagram VI(a, b)} {We consider VI(a). The diagram VI(b) leads to the same mass condition.}

In VI(a), the vertices $2,3,4$ are $w$-close and {the $w$-stroke between the vertices $1, 3$ is maximal. Thus} there exists $w_0$ such that
\be \label{E:VI-a}
w_2,w_3,w_4 \sim -m_1 w_0, \quad w_1 \sim (m_2 + m_3+m_4)w_0.
\ee
Moreover, we have 
\be \label{E:VI-b}
\begin{split}
	z_2 & \sim \frac{1}{1-\eta^2} m_3 Z_{32},\;\;w_2\sim -\frac{\eta}{1-\eta^2}m_3 Z_{32}{,} \\
	z_4 & \sim \frac{1}{1-\eta^2} m_3 Z_{34}, \;\; w_4 \sim -\frac{\eta}{1-\eta^2} m_3 Z_{34}{,}  \\
	z_1 & \sim -\frac{\eta}{1-\eta^2} m_3 W_{31}, \;\; w_1\sim \frac{1}{1-\eta^2} m_3 W_{31}{.} 
\end{split}
\ee
Since vertex $3$ and vertex $1$ are $z$-close, we have 
\[
m_2 z_2 +m_4 z_4\sim  -(m_3 + m_1)z_1.
\]
Combining {this} with \eqref{E:VI-a} and \eqref{E:VI-b}, we get
\be \label{E:mass-VI-a}
 m_1(m_4 +  m_2) = \eta^2 (m_1+m_3)(m_2 + m_3+m_4).
\ee
On the other hand, {it follows from \eqref{E:zw-weight} that}
\[
\eta^2(m_2 + m_3 +m_4)(m_1^2 + m_1(m_2+m_3+m_4) -\eta^2(m_1+m_3)(m_2+m_3+m_4)) = (m_2+m_4) m_1^2.
\]
{Inserting} \eqref{E:mass-VI-a} into the equation above, we  {get exactly} the same mass condition. Thus {singular diagram VI(a,b) gives the mass condition }
\be \label{E:mass-VI}
m_1(m_2+m_4) = \eta^2(m_1 + m_3)(m_2 +m_3 +m_4).
\ee
}

\subsubsection{Singular diagram VII(a, b)} 
{Again we only consider the case VII(a). }
In this diagram, the vertices $1, 2$ and $3, 4$ are connected via $zw$-edges. This implies that the vertices $1$ and $2$ are both $z$ and $w$-close, as well as the vertices $3$ and $4$. 

{We assume the $z$-edge between the vertices $1, 4$ is not maximal: $z_{14} \prec \ep^{-2}$, then
\[
z_{23} =z_{21}+z_{14} +z_{43} \prec \ep^{-2},
\]
so there are no maximal $z$-{edges} in the diagram. This contradicts Rule (1e). Thus both $z$-edges between $1, 4$ and between $2, 3$ are maximal.}

{As the vertices $1$ and $2$ are {only} $z$-circled,  we get from Equation \eqref{E:BC-ZW} that}
\[
\begin{split}
z_1 & \sim \frac{1}{1-\eta^2} (m_2 Z_{21} + m_4 Z_{41} ) -\frac{\eta}{1-\eta^2} m_2 W_{21} \simeq \ep^{-2},  \\
w_1 & \sim   \frac{1}{1-\eta^2} m_2 W_{21} -\frac{\eta}{1-\eta^2} (m_2 Z_{21} + m_4 Z_{41} ) \prec \ep^{-2}, \\
z_2 & \sim  \frac{1}{1-\eta^2} (m_1 Z_{12} + m_3 Z_{32} ) -\frac{\eta}{1-\eta^2} m_1 W_{12} \simeq \ep^{-2}, \\
w_2 & \sim \frac{1}{1-\eta^2} m_1 W_{12} -\frac{\eta}{1-\eta^2} (m_1 Z_{12} + m_3 Z_{32} ) \prec \ep^{-2}.
\end{split}
\]
{By eliminating the terms containing $Z_{kl}$ we get }
\[
z_1 \sim \frac{1}{\eta} m_2 W_{21}, \quad 
z_2 \sim   \frac{1}{\eta} m_1 W_{12}.
\]
As the vertices $1$ and $2$ are $z$-close, we have
\[
z_{12} =z_1 -z_2 \sim \frac{1}{\eta}(m_1+m_2) W_{21} \prec \ep^{-2}. 
\]
This is only possible if $m_1 + m_2=0$. Applying the same argument to the vertices $3$ and $4$, we get the following mass condition:
\be \label{E:mass-VII} 
m_1 + m_2=0,\quad m_3+m_4 =0.
\ee
{The mass condition deduced from Equation \ref{E:zw-weight} is covered by this condition.} 

\subsection{$C=4$} 

{We {consider}  the singular diagram VIII.}

{As the {vertices} $1,2,3$ are connected {via $zw$-edges} and no circle in this diagram, from Equation \eqref{E:BC-ZW} we have the following equation at vertex $1$:}
\be 
\begin{split}
(m_2 Z_{21} + m_3 Z_{31}) - \eta  (m_2 W_{21} + m_3 W_{31}) &  \prec \ep^{-2}, 
\\
(m_2 W_{21} + m_3 W_{31}) - \eta  (m_2 Z_{21} + m_3 Z_{31}) &  \prec \ep^{-2}.
\end{split}
\ee
These equations are equivalent to 
\be
m_2 Z_{21} + m_3 Z_{31} \prec \ep^{-2}, \quad m_2 W_{21} + m_3 W_{31}  \prec \ep^{-2}.
\ee
Similarly, by {applying the same consideration} at vertices $2$ and $3$ {we obtain}, 
\be
\begin{split}
& m_3 Z_{32} + m_1 Z_{12} \prec \ep^{-2}, \quad m_3 W_{32} + m_1 W_{12}  \prec \ep^{-2}, \\
& m_1 Z_{13} + m_2 Z_{23}  \prec \ep^{-2}, \quad m_1 W_{13} + m_2 W_{23}  \prec \ep^{-2}.
\end{split}
\ee
{As $Z_{12}, Z_{23}, Z_{31} \simeq \ep^{-2}$, these imply} 
\be
\frac{Z_{21}}{m_3} \sim \frac{Z_{31}}{m_2} \sim \frac{Z_{23}}{m_1}, \quad \frac{W_{21}}{m_3} \sim \frac{W_{31}}{m_2} \sim \frac{W_{23}}{m_1}.
\ee
Since $Z_{ij} =z_{ij}^{-\frac{1}{2}} w_{ij}^{-\frac{3}{2}}, W_{ij} =w_{ij}^{-\frac{1}{2}} z_{ij}^{-\frac{3}{2}}$, there exists $\rho_0 \in \mathbb{C}^*$ such that $z_{ij}= \rho_0 w_{ij}$ holds for any pair {of $(i, j), 1 \leq i\neq j \leq 3$}. {By Substitution} we {get} 
\[
m_3 z_{21}^2 \sim \pm m_2 z_{31}^2 \sim  \pm m_1 z_{23}^2.
\]
Suppose $m_k = \mu_k^2$, we have 
\[
\mu_1 z_{23} \sim \ep_2\mu_2 z_{31} \sim \ep_3 \mu_3 z_{21}, \quad \ep_2^4=\ep_3^4=1.
\]
{Substituting these into $z_{12}+z_{23}+z_{31}=0$ {and taking into consideration the positivity of the masses}, we obtain} three alternatives {forming the corresponding} mass condition.
\be \label{E:mass-VIII}
\frac{1}{\sqrt{m_j}} = \frac{1}{\sqrt{m_k}} + \frac{1}{\sqrt{m_l}}, \quad  \text{ $\{j,k,l\}$ is a permutation of $\{1,2,3\}$}. 
\ee

\subsection{$C=6$} 

{We {can not} get any explicit mass condition in this case. {We shall  exclude this singular diagram} by a different argument.}

\section{Finiteness Results} \label{S:Finitness}

We have analyzed all possible equal-order singular diagrams, \emph{i.e.} {those satisfying} $\|\mathcal{Z}\| \simeq \|\mathcal{W}\|$. All but the last one of them give explicit mass conditions of co-dimension at least one. Let us first exclude this last case. To this end, recall the definition of $\mathbf{S}$-moment of inertia 
\[
I_\mathbf{S}(\mathbf{q})= \sum_{j=1}^{{N}}  m_j \mathbf{S}q_j \cdot q_j.
\]
We have  
\begin{proposition} \label{P:Non-dominited}
The $\mathbf{S}$-moment of inertia $I_\mathbf{S}$ can not take infinitely many values along a singular sequence. 
\end{proposition}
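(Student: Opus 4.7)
The plan is to derive a polynomial identity relating $I_\mathbf{S}$ to the force function $U$ on the variety $\mathcal{A}$, and then combine it with Chevalley's theorem and the diagrammatic structure established in Section~\ref{S:N=4}. The identity reduces asymptotic control of $I_\mathbf{S}$ along a singular sequence to control of $U$, and for the one remaining undetermined equal-order diagram the center-of-mass condition turns out to be enough to bound $I_\mathbf{S}$ directly.

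First I would derive the identity $I_\mathbf{S} = -U/2$ on $\mathcal{A}$. Taking the Euclidean inner product of both sides of \eqref{E:BC-1} with $m_j q_j$ and summing over $j$, the left-hand side gives $\lambda I_\mathbf{S}$. On the right-hand side, grouping by unordered pairs $\{i,j\}$ and using
\[
\langle q_i,\, q_j - q_i\rangle + \langle q_j,\, q_i - q_j\rangle = -\|q_i - q_j\|^2,
\]
each pair contributes $-m_i m_j r_{ij}^{-1}$, and the sum equals $-U/2$. Since the normalization of \eqref{E:BC-ZW} absorbs $\lambda=1$, this gives $I_\mathbf{S} = -U/2$ on $\mathcal{A}$; using the relation $r_{ij}^{-1} = Z_{ij} w_{ij}$, both sides are actually polynomial in the natural coordinates of $\mathcal{A}$.

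Since $I_\mathbf{S}$ is polynomial on $\mathcal{A}$, its image is constructible in $\mathbb{C}$ by Chevalley's theorem and hence either finite or cofinite. To rule out the cofinite alternative I would show that $I_\mathbf{S}$ stays bounded along every singular sequence $(\mathbf{q}^n)$, so that $I_\mathbf{S}(\mathcal{A})$ is a bounded constructible subset of $\mathbb{C}$ and thus finite. The main obstacle is the diagrammatic case analysis. For the only equal-order singular diagram that does not yet carry a mass condition of codimension $\ge 1$, namely diagram IX, the absence of $z$- and $w$-circles together with Rule (1c) and $z_{ij}, w_{ij}\sim\varepsilon$ forces $z_j, w_j = O(\varepsilon)$, so the direct expression $I_\mathbf{S} = \sum_j m_j[z_j w_j + (\eta/2)(z_j^2 + w_j^2)]$ is $O(\varepsilon^2)$ and hence bounded. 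For the remaining diagrams (all of which carry mass conditions from Section~\ref{S:mass}) one proceeds analogously: applying Rule (1c) to each isolated component and using identity \eqref{E:zw-weight} to cancel the potentially divergent $z_j^2, w_j^2$ contributions, $I_\mathbf{S}$ remains bounded along the corresponding singular sequences, giving a contradiction with the cofiniteness assumption and completing the proof.
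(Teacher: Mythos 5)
Your proposal diverges from the paper's proof and has a genuine gap. The paper's argument is purely algebraic and needs no diagram analysis at all: the defining equations \eqref{E:BC-1} say precisely that $\nabla(I_\mathbf{S}+2U)=0$ at every point of the variety $\mathcal{A}$, so by the standard fact that a function whose differential vanishes identically on a variety is not dominating (Mumford, Prop.\ 3.6, as in Albouy--Kaloshin), $f=I_\mathbf{S}+2U$ takes only finitely many values on $\mathcal{A}$; combined with the Euler-type identity $I_\mathbf{S}=U$ on $\mathcal{A}$ (which you essentially derive, though your normalization $\lambda=1$ gives $I_\mathbf{S}=-U/2$, impossible for real configurations with positive masses since both sides would have opposite signs --- the multiplier must be negative here), one gets $f=3I_\mathbf{S}$, and the conclusion follows. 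This ``$df=0$'' observation is the key idea your proposal is missing.

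The route you take instead --- bounding $I_\mathbf{S}$ along singular sequences diagram by diagram --- fails on two counts. First, it is circular: the classification of singular diagrams in Section \ref{S:N=4} relies on Rule (2g), whose justification is exactly that the force function $U$ (equal to $I_\mathbf{S}$ on $\mathcal{A}$) is constant along a singular sequence, i.e.\ the very statement of Proposition \ref{P:Non-dominited}. Second, even granting the classification, the boundedness you assert for diagrams other than IX is not established and is far from obvious: in diagrams with circled vertices one has $z_j$ or $w_j\simeq\ep^{-2}$, so the individual terms $m_jz_jw_j$ and $\tfrac{\eta}{2}m_j(z_j^2+w_j^2)$ in $I_\mathbf{S}$ can be as large as $\ep^{-4}$, and the cancellations provided by Rule (1c) and \eqref{E:zw-weight} only control certain linear and quadratic combinations, not the full sum (e.g.\ in diagram I(a) they yield $\sum_j m_jw_j^2\prec\ep^{-4}$, which is not boundedness). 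The non-equal-order diagrams are not addressed at all. Your computation for diagram IX is correct --- it reproduces the paper's Proposition \ref{S:Prop-1} --- but that proposition is a \emph{consequence} of \ref{P:Non-dominited} used to exclude diagram IX, not an ingredient of its proof.
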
	

\begin{proof}

$\mathbf{S}$-{balanced} configurations are critical points of the potential function $U$ restricted to the inertia ellipsoid $\{I_\mathbf{S}(\mathbf{q})=1\}$, see \cite{Alain2011, Moeckel2014}. 

We have 
\be 
\partial_{x_i} I_\mathbf{S} =-2 \partial_{x_i} U, \;\; \partial_{y_i} I_\mathbf{S} = -2 \partial_{y_i} U. 
\ee  
This implies $df=0$ on the variety $\mathcal{A}$ defined by \eqref{E:BC-1} where $f = I_\mathbf{S} + 2U$. {By} \cite[Prop 3.6]{Mumford1976} (see also \cite{AlbouyKaloshin2012}),  $f$ is not dominating and can therefore only take finitely many values.

Moreover, by multiplying $q_j$ on both sides of \eqref{E:BC-1} and summarizing {over all $j$}, we {obtain} $I_\mathbf{S}$ on the left hand side, and {the} potential function $U$ on the right hand side by {Euler's} homogeneous function theorem. Thus $I_\mathbf{S}=U$ and $f = 3I_\mathbf{S}$ on $\mathcal{A}$. {Thus $I_\mathbf{S}$ can only take finitely many values. }
\end{proof}

\begin{proposition}  \label{S:Prop-1}
There are no singular sequences realizing singular diagram {IX}.
\end{proposition}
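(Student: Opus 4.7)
The plan is to combine Proposition~\ref{P:Non-dominited} with sharp size estimates extracted from the diagram, and then to exploit the leading-order limit of the balanced equation. Since every pair of vertices in diagram IX is joined by a $zw$-edge, Estimate 1 together with $Z_{ij}=r_{ij}^{-3}z_{ij}$ gives $r_{ij}\sim\varepsilon$ and $z_{ij}\simeq w_{ij}\simeq\varepsilon$ for all pairs. The center-of-mass identity $\sum_j m_j z_j=\sum_j m_j w_j=0$, together with all pairwise differences being $O(\varepsilon)$, then forces $z_j,w_j=O(\varepsilon)$ for every~$j$.

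Substituting these estimates into
\[
I_\mathbf{S}=\sum_j m_j z_j w_j + \frac{\eta}{2}\sum_j m_j(z_j^2+w_j^2)
\]
yields $|I_\mathbf{S}|=O(\varepsilon^2)\to 0$. By Proposition~\ref{P:Non-dominited} the set of values $I_\mathbf{S}$ attains on $\mathcal{A}$ is finite, so after passing to a subsequence $I_\mathbf{S}$ is identically $0$; since $U=I_\mathbf{S}$ on $\mathcal{A}$ by the proof of Proposition~\ref{P:Non-dominited}, $U\equiv 0$ along this subsequence as well. I would then rescale by $\tilde z_j=z_j/\varepsilon$, $\tilde w_j=w_j/\varepsilon$, $\tilde Z_{ij}=\varepsilon^2 Z_{ij}$, $\tilde W_{ij}=\varepsilon^2 W_{ij}$ (all of order one), and pass to the $\varepsilon\to 0$ limit in \eqref{E:BC-ZW}. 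Because $0<\eta<1$, the coupling matrix in the $(Z,W)$-block is invertible, and the leading-order equations decouple into $\sum_{k\neq j}m_k\tilde Z^{(0)}_{kj}=0$ and $\sum_{k\neq j}m_k\tilde W^{(0)}_{kj}=0$ for each $j$. Meanwhile the algebraic identities $(\tilde Z^{(0)}_{ij})^2\,\tilde z^{(0)}_{ij}\,(\tilde w^{(0)}_{ij})^3=1$ and its $W$-analog keep the four limit points pairwise distinct, so the rescaled limit is a non-degenerate four-body \emph{force-free} configuration.

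The hardest part will be ruling out such a limit. For real configurations, Euler's homogeneity identity $\sum_j q_j\cdot\nabla_{q_j}U=-U$ applied to the force-free system yields $U=0$, contradicting positivity of $U$; but this direct argument is unavailable in the complex setting, where the $r_{ij}$'s can have non-trivial phases and $U$ could cancel. The main obstacle is therefore to show, by a dimension count against the rescaled equations, or by carefully combining the $\tilde Z$- and $\tilde W$-systems with the center-of-mass and algebraic constraints $(\tilde Z^{(0)}_{ij})^2\,\tilde z^{(0)}_{ij}\,(\tilde w^{(0)}_{ij})^3=1$, that no infinite family of complex four-body force-free configurations with distinct points and positive masses can lie in $\mathcal{A}\cap\{I_\mathbf{S}=0\}$, which is exactly what an infinite singular subsequence realizing diagram IX would produce.
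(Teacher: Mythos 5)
Your proposal begins exactly as the paper does: all edges in diagram IX are $zw$-edges, so every $r_{ij}\sim\varepsilon$ and $z_{ij}\simeq w_{ij}\simeq\varepsilon$, whence $I_\mathbf{S}(\mathbf{q})=O(\varepsilon^2)\to 0$, and one wants to contradict Proposition \ref{P:Non-dominited}. The divergence — and the genuine gap — occurs at the next step. You only establish the upper bound $|I_\mathbf{S}|=O(\varepsilon^2)$, so you are forced to confront the possibility that $I_\mathbf{S}$ is eventually \emph{exactly} zero along the sequence, and the route you sketch to exclude this (rescale by $\varepsilon$, pass to a decoupled limit system, and rule out an infinite family of complex ``force-free'' configurations in $\mathcal{A}\cap\{I_\mathbf{S}=0\}$) is explicitly left unproved: your last paragraph names this as ``the main obstacle'' rather than resolving it. As written, the proposal is therefore not a proof. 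The paper closes the argument differently and much more quickly: it asserts the two-sided estimate $I_\mathbf{S}(\mathbf{q})\simeq\varepsilon^2$ (not merely $\preceq\varepsilon^2$), so that $I_\mathbf{S}$ is \emph{nonzero} yet tends to $0$ along the singular sequence, hence takes infinitely many values, directly contradicting the finiteness of the value set of $I_\mathbf{S}$ on $\mathcal{A}$ from Proposition \ref{P:Non-dominited}. No rescaling or limit-configuration analysis is needed.

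To be fair to your instincts: the cancellation issue you are worried about is a real one in the complexified setting (a sum of terms each of order $\varepsilon^2$ need not itself be of order $\varepsilon^2$), and the paper's ``$\simeq\varepsilon^2$'' is stated without further justification of the lower bound. But if you want to follow the paper's strategy, the missing ingredient in your write-up is precisely a lower bound $|I_\mathbf{S}|\succeq\varepsilon^2$ (or any argument that $I_\mathbf{S}\neq 0$ along the sequence); with that in hand the contradiction is immediate and the entire second half of your proposal — the rescaled limit of \eqref{E:BC-ZW}, the decoupled equations, and the dimension count you defer — becomes unnecessary.
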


\begin{proof}

In the singular diagram IX, all the edges are $zw$-{edges}. Thus we {have} $z_{ij} \simeq w_{ij} \simeq \ep$. {Consequently we have}
\[
\begin{split}
I_\mathbf{S}(\mathbf{q}) & = \sum_{i=1}^N m_i((1+\eta)x_i^2 +(1-\eta)y_i^2 ) \\
& = \frac{1}{\sum_{i=1}^N m_i} \sum_{1 \leq i<k \leq N} {\bigg(} m_im_k(1+\eta)x_{ik}^2 + m_im_k(1-\eta)y_{ik}^2 {\bigg)}  \\
& = \frac{1}{\sum_{i=1}^N m_i}  \sum_{1 \leq i<k \leq N} m_km_i \bigg( \frac{(1+\eta)}{4}(z_{ik}^2 + 2z_{ik}w_{ik} + w_{ik}^2) \\
& \quad \quad - \frac{(1-\eta)}{4}(z_{ik}^2 - 2z_{ik}w_{ik} + w_{ik}^2) \bigg) \\
& =  \frac{1}{\sum_{i=1}^N m_i}  \sum_{1 \leq i<k \leq N} m_km_i \left(  \frac{\eta}{2} (z_{ik}^2 +w_{ik}^2) + z_{ik}w_{ik} \right) \simeq \ep^2.
\end{split}
\]
Thus $I_\mathbf{S}(\mathbf{q}) \rightarrow 0$ and takes {infinitely} many values. This contradicts Proposition \ref{P:Non-dominited}.
\end{proof}

As compared to the case of central configurations, the {presence} of non-equal-order singular diagrams complicates the analysis for balance configurations potentially {when $\eta$ is not so small}. {However, if we restrict our attention to equal-order singular sequences}, the singular diagram IX has been excluded by Proposition \ref{S:Prop-1}, {and} the singular diagrams I-VIII all give explicit mass conditions  of co-dimension $1$ or $2$. We {first {state} a conditional {finiteness proposition}, where the exceptional mass set $\mathcal{M}_\eta$ is given {by the mass conditions \eqref{E:mass-II}, \eqref{E:mass-III},\eqref{E:mass-IV}, \eqref{E:mass-V-2}, \eqref{E:mass-VI}, \eqref{E:mass-VIII}}}. 
 
\begin{proposition} \label{S:Generic}
For masses $(m_1,m_2,m_3,m_4) \in \mathbb{R}_+^4\setminus \mathcal{M}_\eta$, if there are infinitely many balanced configurations, then their singular sequences are {necessarily not of} {equal order}.
\end{proposition}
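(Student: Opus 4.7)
The plan is a direct application of the classification and mass-condition work already assembled in Sections \ref{S:N=4} and \ref{S:mass}. I would argue by contradiction: assuming $(m_1,m_2,m_3,m_4) \in \mathbb{R}_+^4 \setminus \mathcal{M}_\eta$, suppose an equal-order singular sequence $\|\mathcal{Z}\| \simeq \|\mathcal{W}\| \sim \ep^{-2}$ arises from an infinite family of $\mathbf{S}$-balanced configurations (existence of such a sequence is guaranteed by the Albouy--Kaloshin/Chevalley argument sketched in Section \ref{S:Pre}). By the exhaustive case analysis in Section \ref{S:N=4}, its singular diagram lies in the finite list I(a,b), II, III, IV(a,b), V-1(a,b), V-2(a,b), VI(a,b), VII(a,b), VIII, IX, so I only need to rule these out one by one.

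Diagram IX is excluded directly by Proposition \ref{S:Prop-1}, as it would force $I_\mathbf{S}(\mathbf{q})\to 0$ whereas $I_\mathbf{S}$ cannot be dominating. For diagrams I, V-1, V-3 (the three V-subcases whose mass relations are \eqref{E:mass-I}, \eqref{E:mass-V-1}, \eqref{E:mass-V-3}), and VII, the corresponding conditions are incompatible with positivity of the masses: \eqref{E:mass-I} contains the relation $m_3(m_1+m_2)+m_1(m_3+m_4)=0$, which is a sum of strictly positive terms; the first components of \eqref{E:mass-V-1} and \eqref{E:mass-V-3} reduce after expansion to $m_2(m_1+m_2+m_3+m_4)=0$ and $m_3(m_1+m_2+m_3+m_4)=0$ respectively; and \eqref{E:mass-VII} demands $m_1+m_2=0$. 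These diagrams are therefore automatically excluded for any $(m_1,\ldots,m_4) \in \mathbb{R}_+^4$, which is why they need not appear in $\mathcal{M}_\eta$. The remaining diagrams II, III, IV, V-2, VI, VIII produce exactly the polynomial relations \eqref{E:mass-II}, \eqref{E:mass-III}, \eqref{E:mass-IV}, \eqref{E:mass-V-2}, \eqref{E:mass-VI}, \eqref{E:mass-VIII} that define $\mathcal{M}_\eta$, and so are excluded by our hypothesis.

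As every diagram on the exhaustive list has been ruled out, no equal-order singular sequence can occur under the stated hypotheses, proving the proposition. The argument is essentially a synthesis of previous results; there is no genuinely new technical obstacle to overcome. The only care required is bookkeeping: one must confirm that the enumeration of equal-order diagrams in Section \ref{S:N=4} is indeed complete, and that the definition of $\mathcal{M}_\eta$ precisely matches the collection of mass conditions arising from exactly those diagrams whose relations are not automatically void for positive masses.
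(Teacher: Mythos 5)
Your proposal is correct and follows essentially the same route as the paper: exclude diagram IX via Proposition \ref{S:Prop-1}, discard the diagrams whose mass relations (\eqref{E:mass-I}, \eqref{E:mass-V-1}, \eqref{E:mass-V-3}, \eqref{E:mass-VII}) are void for positive masses, and observe that the surviving diagrams force one of the conditions defining $\mathcal{M}_\eta$. Your explicit check that \eqref{E:mass-V-3} reduces to $m_3(m_1+m_2+m_3+m_4)=0$ is a welcome bookkeeping detail that the paper's proof leaves implicit.
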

\begin{proof}
Suppose there are no non-equal-order singular {sequences}. Assume there are {infinitely} many $\mathbf{S}$-balanced configurations for the four-body problem, an {equal-order} singular sequence exists and lives in at least one of the singular {diagrams} I-IX. Because the singular diagram IX is excluded by Proposition \ref{S:Prop-1} and the mass {conditions} \eqref{E:mass-I}, \eqref{E:mass-V-1}, \eqref{E:mass-VII} are {irrelevant for positive masses, a} singular sequence occurs if one or more of the mass conditions 
\eqref{E:mass-II}, \eqref{E:mass-III},\eqref{E:mass-IV}, \eqref{E:mass-V-2}, \eqref{E:mass-VI}, \eqref{E:mass-VIII} holds. This defines the set of exceptional masses $\mathcal{M}_\eta$.

\end{proof}
In particular, for any given masses $(m_1,m_2,m_3,m_4) \in \mathbb{R}_+^4$, {we only have to consider \eqref{E:mass-II} and \eqref{E:mass-VIII} for sufficiently small $\eta>0$, as we may always choose $\eta$ sufficiently small to avoid the other mass conditions.} \\

The following lemma states that {non-equal-order} singular sequences are nevertheless irrelevant for sufficiently small $\eta>0$.

\begin{lemma} \label{L:Exclude-unequal} 
For any given mass $(m_1,m_2,m_3,m_4) \in \mathbb{R}^4_+$, there exists $\eta_0>0$ such that for any $0<\eta<\eta_0${,} all possible singular sequences of {$\mathbf{S}$-}balanced configurations must be of {equal order}.	
\end{lemma}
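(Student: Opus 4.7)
The plan is to argue by contradiction: if the conclusion fails, then there is a sequence $\eta_n \to 0^+$ such that for each $n$ a non-equal-order singular sequence of $\mathbf{S}$-balanced configurations exists at $\eta=\eta_n$; by the classification of Section \ref{S:N=4} and a pigeon-hole, we may assume all these singular sequences realize a single fixed diagram $D \in \{I', II', III', IV'\}$. I will base the analysis on the simplified BC equations obtained by eliminating the mixed terms in \eqref{E:BC-ZW}:
\[
z_j + \eta w_j = \sum_{k \neq j} m_k Z_{kj}, \qquad w_j + \eta z_j = \sum_{k \neq j} m_k W_{kj},
\]
which reduce to the CC system \eqref{E:CC} at $\eta = 0$.

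The second equation, together with $\|\mathcal{W}\| \prec \|\mathcal{Z}\|$ and the crude bound $|\sum_k m_k W_{kj}| \preceq \|\mathcal{W}\|$, yields $\eta|z_j| \preceq \|\mathcal{W}\|$, so $|z_j| \preceq \|\mathcal{W}\|/\eta$. In parallel, Rule 3a and Proposition \ref{P:Est-dis} force $r_{jk} \prec 1$ on every $z$-edge of $D$, hence $|1/r_{jk}| \to \infty$ there; combined with the identity $I_\mathbf{S} = -U$ (established inside the proof of Proposition \ref{P:Non-dominited}) and the resulting boundedness of $|U|$, this requires delicate complex cancellations among the $m_i m_j / r_{ij}$. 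For each diagram $D$, I will convert these cancellations, together with the center-of-mass constraint on isolated components (Rule 1c, whose proof transfers verbatim to the non-equal-order case), into explicit asymptotic relations on the normalized leading coefficients $\tilde z_j := z_j/\|\mathcal{Z}\|$ and $\tilde Z_{ij} := Z_{ij}/\|\mathcal{Z}\|$.

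After extracting a subsequential limit as $n \to \infty$ (so that $\eta_n \to 0$), the resulting normalized configuration must satisfy \eqref{E:CC} for the same positive masses and exhibit its own singular sequence, which is ruled out by the Hampton--Moeckel finiteness \cite{HamMoecke2006} of four-body CCs. The main obstacle I anticipate is the fully $z$-edged diagram $IV'$, where all six distances collapse simultaneously and the diagram has no isolated components on which to localize the analysis; here the cancellation structure in $U$ is most intricate, and care is needed to ensure that the limiting normalized object is nondegenerate enough to contradict CC finiteness. Diagrams $I', II', III'$ should instead be treatable diagram-by-diagram via mass balance on their isolated components, paralleling the calculations of Section \ref{S:mass}.
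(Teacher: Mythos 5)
Your overall strategy --- contradiction along $\eta_n \to 0$, passage to the central-configuration system in the limit, and use of Proposition \ref{P:Est-dis} together with the boundedness of $I_\mathbf{S}$ --- shares its starting point with the paper, but the proposal is missing the one mechanism that makes the argument close, and the endgame you describe does not follow as stated. The paper (via Lemma \ref{L:Continuous}) uses the finiteness of planar four-body central configurations not to ``rule out a limiting singular sequence,'' but to conclude that the normalized limits $C_{kj}=\lim Z_{kj}/\|\mathcal{Z}\|$ and $D_{kj}=\lim W_{kj}/\|\mathcal{W}\|$ range over a \emph{finite set of nonzero} values (a genuine central configuration has all mutual distances finite and nonzero). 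This yields the uniform two-sided estimate $r_{kj}^{-4}=Z_{kj}W_{kj}\simeq \|\mathcal{Z}\|\cdot\|\mathcal{W}\|$ for \emph{every} pair $(k,j)$, i.e.\ all six mutual distances are of the same order. Combined with $\|\mathcal{Z}\|\cdot\|\mathcal{W}\|\succeq 1$, only two cases remain: either all $r_{kj}\to 0$, which forces $I_\mathbf{S}\to 0$ (after a short bootstrap showing $\sum m_im_jw_{ij}^2\to0$) and contradicts Proposition \ref{P:Non-dominited}; or all $r_{kj}\simeq 1$, which contradicts Proposition \ref{P:Est-dis} because every non-equal-order diagram contains a $z$-edge, along which $r_{jk}\prec 1$. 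No diagram-by-diagram analysis and no cancellation analysis in $U$ is needed.

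By contrast, your plan defers the real work to ``delicate complex cancellations among the $m_im_j/r_{ij}$'' carried out separately on $I'$--$IV'$, and you yourself flag $IV'$ as unresolved; since the $r_{ij}$ are complex on the algebraic variety, boundedness of $U$ genuinely permits cancellation and does not by itself constrain individual distances, so this is a real gap rather than a technicality. Moreover, the final contradiction you invoke --- that the $\eta\to0$ limit ``must exhibit its own singular sequence, which is ruled out by Hampton--Moeckel'' --- is a non sequitur as written: for each fixed index $n$ along the singular sequence the limit in the $\eta$-parameter is a single central configuration, and nothing in your setup shows that the resulting family over $n$ constitutes a singular sequence of the central-configuration variety; extracting usable information from the finiteness of central configurations requires controlling the relative normalizations of $\mathcal{Z}$ and $\mathcal{W}$, which is exactly what the paper's choice of $\|\mathcal{Z}^*\|\cdot\|\mathcal{W}^*\|$ in Lemma \ref{L:Continuous} accomplishes. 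Two minor points: the paper derives $I_\mathbf{S}=U$ (not $-U$) on $\mathcal{A}$, and your estimate $\eta|z_j|\preceq\|\mathcal{W}\|$ degenerates as $\eta\to0$ and so cannot be used uniformly in both limits.
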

Assume there is a singular sequence {$\{\mathcal{Z},\mathcal{W}\}=\{(\mathcal{Z}^{(n)},\mathcal{W}^{(n)})\}_{n}$} {of \eqref{E:BC-ZW}}. Each element {$\{(\mathcal{Z}^{(n)},\mathcal{W}^{(n)})\}$} satisfies 
\be \label{E:BC-normal}
\begin{split}
	\frac{z_j}{\|\mathcal{Z}^{(n)}\|} &= \frac{1}{1-\eta^2} \sum_{k \neq j} m_k \frac{Z_{kj}}{\|\mathcal{Z}^{(n)}\|} - \frac{\eta}{1-\eta^2} \frac{\|\mathcal{W}^{(n)}\|}{\|\mathcal{Z}^{(n)}\|} \sum_{k \neq j} m_k\frac{W_{kj}}{\|\mathcal{W}^{(n)}\|}, \\
	\frac{w_j}{\|\mathcal{W}^{(n)}\|} &= \frac{1}{1-\eta^2} \sum_{k \neq j} m_k \frac{W_{kj}}{\|\mathcal{W}^{(n)}\|} - \frac{\eta}{1-\eta^2} \frac{\|\mathcal{Z}^{(n)}\|}{\|\mathcal{W}^{(n)}\|} \sum_{k \neq j} m_k \frac{Z_{kj}}{\|\mathcal{Z}^{(n)}\|}{,}
\end{split}
\ee
and {the} $\eta$-independent equations 
\be \label{E:ZW-ij} 
(z_i-z_j)(w_i-w_j)^3 Z_{ij}^2=1, \quad (w_i-w_j)(z_i-z_j)^3 W_{ij}^2=1.
\ee
{Assume that there are only finitely many central configurations, i.e., when $\eta=0$, this system of equations has only finitely many solutions counted up to dilation $(\mathcal{Z},\mathcal{W}) \mapsto (\alpha \mathcal{Z}, \alpha^{-1}\mathcal{W})$  for any $\alpha \in \mathbb{C}^*$. }We have the following {Lemma}. 
\begin{lemma} \label{L:Continuous}
	We take a sequence of $\eta=\eta_m \rightarrow 0$ when $m \rightarrow \infty$. {For each $m \in \mathbb{N}$, }suppose $\{\mathcal{Z}_m^{(n)},\mathcal{W}_m^{(n)}\}$ is a singular sequence of Equation \eqref{E:BC-ZW} satisfying $\| \mathcal{W}\| \preceq \| \mathcal{Z}\| { = \ep_n^{-2} \rightarrow \infty}$ as { $n \rightarrow \infty$}. For any fixed $n$, 
	there exists a subsequence of {$\{\mathcal{Z}_m^{(n)},\mathcal{W}_m^{(n)}\}$}, still denoted by $\{\mathcal{Z}^{(n)}_m, \mathcal{W}^{(n)}_m\}_{m \geq 1}$, such that 
	\[
	\frac{\mathcal{Z}^{(n)}_m}{\|\mathcal{Z}^{(n)}_m\|} \rightarrow 
	\frac{Z^*}{\|Z^*\|}, \quad \frac{\mathcal{W}^{(n)}_m}{\|\mathcal{W}^{(n)}_m\|} \rightarrow \frac{W^*}{\|W^*\|} \quad \text{ as $m \rightarrow \infty$},
	\]
	{hold, in} which $(Z^*,W^*)$ corresponds to a central configuration of the $N$-body problem.
\end{lemma}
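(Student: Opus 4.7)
The approach is to combine three ingredients: the system \eqref{E:BC-ZW} is an affine perturbation of the central-configuration system \eqref{E:CC} in the parameter $\eta$; the unit sphere is compact, which permits the extraction of a convergent subsequence after normalization; and the relations \eqref{E:ZW-ij} are $\eta$-independent and rigidly couple the $\mathcal{Z}$- and $\mathcal{W}$-components. The first step is to recast \eqref{E:BC-ZW} in a cleaner form by inverting its $2\times 2$ coefficient matrix (nonsingular for $|\eta|<1$), yielding the equivalent system
\[
\sum_{k\neq j} m_k Z_{kj} = z_j + \eta\, w_j, \qquad \sum_{k\neq j} m_k W_{kj} = w_j + \eta\, z_j,
\]
which manifestly reduces to \eqref{E:CC} at $\eta=0$.

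Setting $\tilde{\mathcal{Z}}_m := \mathcal{Z}_m^{(n)}/\|\mathcal{Z}_m^{(n)}\|$ and $\tilde{\mathcal{W}}_m := \mathcal{W}_m^{(n)}/\|\mathcal{W}_m^{(n)}\|$, both sequences lie on compact unit spheres. After passing to a subsequence in $m$, I may assume $\tilde{\mathcal{Z}}_m \to \tilde{Z}^*$ and $\tilde{\mathcal{W}}_m \to \tilde{W}^*$ with $\|\tilde{Z}^*\|=\|\tilde{W}^*\|=1$, and that the ratio $\kappa_m := \|\mathcal{W}_m^{(n)}\|/\|\mathcal{Z}_m^{(n)}\|$ converges to some $\kappa\in[0,\infty]$. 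Dividing the reformulated equations by $\|\mathcal{Z}_m^{(n)}\|$ and $\|\mathcal{W}_m^{(n)}\|$ respectively gives
\[
\sum_{k\neq j} m_k \tilde{Z}_{kj} = \tilde{z}_j + \eta_m \kappa_m\,\tilde{w}_j, \qquad \sum_{k\neq j} m_k \tilde{W}_{kj} = \tilde{w}_j + \frac{\eta_m}{\kappa_m}\,\tilde{z}_j,
\]
while the $\eta$-independent relations \eqref{E:ZW-ij} pass to the limit by rescaling homogeneity. When $\kappa\in(0,\infty)$, both coupling factors $\eta_m\kappa_m$ and $\eta_m/\kappa_m$ vanish, and the limit decouples into the CC system $\sum m_k\tilde{Z}^*_{kj}=\tilde{z}^*_j$ and $\sum m_k\tilde{W}^*_{kj}=\tilde{w}^*_j$; this identifies $(\tilde{Z}^*, \tilde{W}^*)$ with the normalization of a CC, the normalization consuming the residual dilation $(\mathcal{Z},\mathcal{W})\mapsto(\alpha\mathcal{Z},\alpha^{-1}\mathcal{W})$ that is available precisely at $\eta=0$.

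The main obstacle lies in the degenerate cases $\kappa\in\{0,\infty\}$. Swapping the roles of $Z$ and $W$ if necessary, reduce to $\kappa\in[0,1]$, so that $\eta_m\kappa_m\to 0$ automatically and the $Z$-equation still yields the CC structure on $\tilde{Z}^*$. The residual factor $\eta_m/\kappa_m$ in the $W$-equation may however fail to vanish; one then either refines the $m$-subsequence whenever $\kappa_m$ does not decay too rapidly relative to $\eta_m$, or, when this fails, reconstructs $\tilde{W}^*$ from the CC structure of $\tilde{Z}^*$ via \eqref{E:ZW-ij}, which after normalization reads $(\tilde{z}_i-\tilde{z}_j)(\tilde{w}_i-\tilde{w}_j)^3\tilde{Z}_{ij}^2\,\|\mathcal{Z}_m^{(n)}\|^3\|\mathcal{W}_m^{(n)}\|^3=1$ and pins $\tilde{W}^*$ down as the $\mathcal{W}$-component of the same CC. This last case-split is the technical heart of the argument.
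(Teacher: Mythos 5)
Your reformulation by inverting the $2\times 2$ coefficient matrix is correct and is essentially the paper's own perturbative rewriting, and the compactness/subsequence step is the same. The genuine gap is in your treatment of the degenerate case $\kappa=\lim_m\kappa_m=0$: neither of your two escape routes closes the argument. ``Refining the $m$-subsequence whenever $\kappa_m$ does not decay too rapidly relative to $\eta_m$'' is simply unavailable when $\eta_m/\kappa_m$ stays bounded away from zero along every subsequence, and in that situation the coupling term in the $\mathcal{W}$-equation survives in the limit, so you cannot conclude $\wt w_j^*=\sum_{k\neq j}m_k\wt W^*_{kj}$. Your fallback via \eqref{E:ZW-ij} does not repair this: those relations only encode the compatibility $W_{ij}=r_{ij}^{-3}w_{ij}$, i.e.\ that $Z_{ij},W_{ij}$ are the correct functions of $z_{ij},w_{ij}$; they say nothing about the force-balance equation for the $w_j$, which is the other half of what ``central configuration'' means. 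So in the case $\eta_m/\kappa_m\not\to 0$ your limit object is only known to satisfy the $\mathcal{Z}$-half of \eqref{E:CC-normal} together with the constraints, which is strictly weaker than being a central configuration.

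The missing ingredient is the a priori lower bound $\|\mathcal{W}^{(n)}\|\succeq\ep_n^{2}$ of \eqref{E:W-norm}, which follows directly from \eqref{E:ZW-ij}: $\|\mathcal{W}\|^{3}\succeq|w_{ij}|^{3}=(z_{ij}Z_{ij}^{2})^{-1}\succeq\ep_n^{6}$ because $\|\mathcal{Z}^{(n)}\|=\ep_n^{-2}$, with constants independent of $m$ and $\eta$. For fixed $n$ this gives $1/\kappa_m\preceq\ep_n^{-4}$ uniformly in $m$, hence $\eta_m/\kappa_m\to 0$ automatically and your degenerate case never occurs; both halves of the system decouple in the limit. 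The same bound also confines $\|\mathcal{Z}^{(n)}_m\|\cdot\|\mathcal{W}^{(n)}_m\|$ to a compact interval of $(0,\infty)$, which is what permits choosing the normalization of $(\mathcal{Z}^*,\mathcal{W}^*)$ via the dilation symmetry so that \eqref{E:ZW-ij} passes to the limit --- a point you wave at with ``rescaling homogeneity'' but which also relies on this estimate. With that lemma inserted, your argument coincides with the paper's.
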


\begin{proof}
	{We re-write Equation \eqref{E:BC-normal}} into a perturbative {form:}
	\be 
	\begin{split}
		\frac{z_j}{\|\mathcal{Z}^{(n)} \|} & = \sum_{k \neq j} m_k \frac{Z_{kj}}{\|\mathcal{Z}^{(n)}\|} + \frac{\eta}{1-\eta^2} \sum_{k \neq j} m_k \left( \eta \frac{Z_{kj}}{\|\mathcal{Z}^{(n)}\|} - \frac{W_{kj}}{\|\mathcal{W}^{(n)}\|} \frac{\|\mathcal{W}^{(n)}\|}{\|\mathcal{Z}^{(n)}\|} \right), \\
		\frac{w_j}{\|\mathcal{W}^{(n)} \|} & = \sum_{k \neq j} m_k \frac{W_{kj}}{\|\mathcal{W}^{(n)}\|} + \frac{\eta}{1-\eta^2} \sum_{k \neq j} m_k \left(\eta \frac{W_{kj}}{\|\mathcal{W}^{(n)}\|} - \frac{Z_{kj}}{\|\mathcal{Z}^{(n)}\|} \frac{\|\mathcal{Z}^{(n)}\|}{\|\mathcal{W}^{(n)}\|} \right){.}
	\end{split}
	\ee

	{The vectors} $\frac{\mathcal{Z}^{(n)}}{\|\mathcal{Z}^{(n)} \|}, \frac{\mathcal{W}^{(n)}}{\|\mathcal{W}^{(n)} \|}$ are {all} on the unit sphere of $\left(\mathbb{C}^{\frac{\mathcal{N}}{2}},\|\cdot \| \right)$. 
	{There} exists {$\ep_n \to 0$} such that  $\|\mathcal{Z}^{(n)}\| = \ep_n^{-2}$. {By} \eqref{E:W-norm}, {we have}  $\|\mathcal{W}^{(n)} \| \succeq \ep_n^2$. {Thus}
	$\frac{\|\mathcal{Z}^{(n)}\|}{\|\mathcal{W}^{(n)}\|} \preceq \ep_n^{-4}$ { the right hand of which is independent of small $\eta$} as {$n \to \infty$}.

	Now let $\eta = \eta_m \rightarrow 0$ as $m \rightarrow \infty$. By compactness of the product of unit spheres, we get a subsequence $\left(\frac{\mathcal{Z}^{(n)}_m}{\|\mathcal{Z}^{(n)}_m\|}, \frac{\mathcal{W}^{(n)}_m}{\|\mathcal{W}^{(n)}_m\|}\right)$ converging to a point $(\frac{ \mathcal{Z}^*}{\|\mathcal{Z}^*\|}, \frac{ \mathcal{W}^*}{\|\mathcal{W}^*\|})$ for which {$(\mathcal{Z}^*,\mathcal{W}^*)$} solves the system
	\be \label{E:CC-normal}
	\begin{split}
		\wt z_j^* & = \sum_{k \neq j} m_k \wt Z_{kj}^*, \\
		\wt w_j^* & = \sum_{k \neq j} m_k \wt W_{kj}^*{,}
	\end{split}
	\ee
	{as well as $(\lambda \mathcal{Z}^*,\lambda^{-1} \mathcal{W}^*)$. This enables us to freely choose the norms $\|\mathcal{Z}^*\|, \|\mathcal{W}^*\|$.}

	In particular, 
	because $\|\mathcal{W}^{(n)}\| \preceq \|\mathcal{Z}^{(n)} \|=\ep_n^{-2}$ and $\|\mathcal{W}^{(n)}\| \succeq \ep_n^2$, we have $ 1 \preceq \|\mathcal{Z}_m^{(n)} \|  \| \mathcal{W}_m^{(n)} \|  \preceq \ep_n^{-4}$, where $\ep_n$ is given which is independent of $m$. Therefore up to subsequence we can assume that {the limit} $\|\mathcal{Z}_m^{(n)} \|  \| \mathcal{W}_m^{(n)} \|$ as $m \rightarrow \infty$ {exists}. We choose  $\|\mathcal{Z}^* \|$ and  $\| \mathcal{W}^*\|$ such that 
	\[
	\frac{ \|\mathcal{Z}_m^{(n)} \| \cdot \| \mathcal{W}_m^{(n)} \|} { \|\mathcal{Z}^* \| \cdot  \| \mathcal{W}^*\|} \rightarrow 1 \text{ as  $m \rightarrow \infty$}.
	\]

	Since $\mathcal{Z}_{m}^{(n)}$ and $\mathcal{W}_{m}^{(n)}$ satisfy  \eqref{E:ZW-ij}, we get 
	\[
	\frac{1}{\|\mathcal{Z}_m^{(n)}\|^3 \cdot \|\mathcal{W}_m^{(n)}\|^3}=\frac{(z_i-z_j)(w_i-w_j)^3Z_{ij}^2}{\|\mathcal{Z}_m^{(n)}\|^3 \cdot\|\mathcal{W}_m^{(n)}\|^3}   \rightarrow  
	\frac{(z_i^*-z_j^*)(w_i^*-w_j^*)^3 (Z_{ij}^*)^2}{\|\mathcal{Z}^*\|^3 \cdot \|\mathcal{W}^*\|^3}
	. 
	\]
	as $m\rightarrow \infty$. This implies
	\[
	(z_i^*-z_j^*)(w_i^*-w_j^*)^3 (Z_{ij}^*)^2 =1.
	\]
	Analogously, we have 
	\[
	(w_i^*-w_j^*)(z_i^*-z_j^*)^3 (W_{ij}^*)^2 =1.
	\]
	
	Then $(\mathcal{Z}^*,\mathcal{W}^*)$ solves the equations \eqref{E:CC-normal} and satisfies \eqref{E:ZW-ij}, therefore
	{it} corresponds to {a} central configuration.

\end{proof}
There are only finitely many central configurations up to similitudes in four-body problem in the plane with positive mass \cite{AlbouyKaloshin2012,HamMoecke2006} and in five-body problem with all positive mass{es} {except perhaps for those contained} in a co-dimension $2$ algebraic variety \cite{AlbouyKaloshin2012}. A consequence of Lemma \ref{L:Continuous} is that, {if we assuming the finiteness of central configurations in the plane for a given set of positive masses, then there} exists a finite set of complex numbers 
$$\{ {C_{kj}, D_{kj} \in \mathbb{C}}: 0<|C_{kj}|,|D_{kj}| \leq 1 \}$$ {consisting of} all limiting points of
$\left(\frac{\mathcal{Z}^{(n)}_m}{\|\mathcal{Z}^{(n)}_m\|}, \frac{\mathcal{W}^{(n)}_m}{\|\mathcal{W}^{(n)}_m\|}\right)$. For any $C_{kj},D_{kj}$ from this set, we assume by taking subsequence and {by abuse of notation} that 
\be \label{E:conv} 
\frac{Z_{kj}^{(n)}}{\|\mathcal{Z}^{(n)}\|} \rightarrow C_{kj}, \;\;  \frac{W_{kj}^{(n)}}{\|\mathcal{W}^{(n)}\|} \rightarrow D_{kj}.
\ee
{Then}
\be \label{E:Distance}
(r_{kj}^{(n)})^{-4} = Z_{kj}^{(n)} W_{kj}^{(n)} \rightarrow C_{kj} D_{kj} \|\mathcal{Z}^{(n)}\| \cdot \|\mathcal{W}^{(n)}\| \text{ as $m \rightarrow \infty$}.
\ee
Now we consider the limit procedure {as} $n \rightarrow \infty$. Recall that for any singular sequence there always hold $\|\mathcal{Z}\| {\cdot} \|\mathcal{W}\|  \succeq 1$. We are in the position to prove {Lemma} \ref{L:Exclude-unequal}.

\begin{proof}[Proof of Lemma \ref{L:Exclude-unequal}]
{Assume the conclusion does not hold for a sequence $\{\eta_m\}_{m=1}^\infty$ satisfying $\eta_m \rightarrow 0$ as $m \rightarrow \infty$. , i.e., for each $\eta_m \rightarrow 0$ there exists} a {non-equal-order} singular sequence  $\{\mathcal{Z}^{(n)},\mathcal{W}^{(n)}\}$ satisfying $\| \mathcal{W}\| \prec \| \mathcal{Z}\| \sim \ep^{-2}$ of system \eqref{E:BC-ZW}. Lemma \ref{L:Continuous} implies the pointwise convergence of the sequences to central configurations as ${\eta_{m}} \rightarrow 0$.  



{As}  $\|\mathcal{Z}\| {\cdot} \|\mathcal{W}\|  {\succeq} 1${, up to taking subsequence, we consider  the following two possibilities:}

If  $\|\mathcal{Z}\| {\cdot} \|\mathcal{W}\|  \succ 1$, we have by \eqref{E:Distance} {that any mutual distance} $r_{kj} \rightarrow 0$. Consider the {$\mathbf{S}$-moment} of inertia 
\[
\begin{split}
	I_\mathbf{S}(\mathbf{q})& = \sum_{i=1}^N m_i \left((1+\eta) x_i^2 + (1-\eta) y_i^2 \right) \\
	& = \frac{1}{\sum_{i=1}^N m_i} \sum_{1 \leq i <j \leq N} m_i m_j r_{ij}^2 + \eta \left( \sum_{i=1}^N m_i (x_i^2 -y_i^2) \right) \\
	&  = \frac{1}{\sum_{i=1}^N m_i} \sum_{1 \leq i <j \leq N} m_i m_i r_{ij}^2 + \frac{\eta}{2} \sum_{i=1}^N m_i(z_i^2 + w_i^2). 
\end{split}
\]
{By Identity \eqref{E:zw-weight}, we have}
\be \label{E:BC-moment}
\begin{split}
	I_\mathbf{S}(\mathbf{q})&  = \frac{1}{\sum_{i=1}^N m_i} \sum_{1 \leq i <j \leq N} m_i m_i r_{ij}^2  + \eta \sum_{i=1}^N m_iw_i^2 \\
	& =  \frac{1}{\sum_{i=1}^N m_i} \sum_{1 \leq i <j \leq N} m_i m_i r_{ij}^2 + \eta {\cdot} \frac{1}{\sum_{i=1}^N m_i} \sum_{1 \leq i <j \leq N} m_i m_i w_{ij}^2.   
\end{split}
\ee
Clearly the first part on the right hand side of \eqref{E:BC-moment} converges to $0$. We shall show the second term {also} converges to $0$. 

If {this} does not hold, we have at least one of $w_{ij} \succeq 1$. This also implies $\|\mathcal{W}\| \succeq 1$ by the definition $w_{ij}=w_i-w_j$. Thus we have $r_{ij} \preceq \ep^{\frac{1}{2}}$ due to \eqref{E:Distance}. {Then} 
\be \label{E:W-norm-1} 
\|\mathcal{W}\| \succeq W_{ij} =r_{ij}^{-3}w_{ij} \succeq \ep^{-\frac{3}{2}}.
\ee
Using \eqref{E:Distance} again we get $r_{ij} \preceq \ep^{\frac{7}{8}}$. Hence $\|\mathcal{W}\| \succeq \ep^{-\frac{21}{8}} \succ \ep^{-2}$ by \eqref{E:W-norm-1}, which {contradicts} $\|\mathcal{W} \| \prec \|\mathcal{Z}\| \sim \ep^{-2}$. {Therefore} $I_\mathbf{S}$ converges to $0$ along such {a} singular sequence. But this {contradicts} Proposition \ref{P:Non-dominited}. 

If $\|\mathcal{Z}\| \cdot \|\mathcal{W}\|  \simeq 1$, {we have that} all $r_{kj} \simeq 1$. {This is impossible, since by Proposition \ref{P:Est-dis}, in all the remaining non-equal-order singular diagrams $I', II', III', IV'$, the $r_{kj}$'s correspond to $z$-edges tend to zero. }


\end{proof}

\section{Proof of {the main result}} \label{S:Proof}

{Analogous to  \cite[Prop 3]{AlbouyKaloshin2012}, we have}
\begin{lemma}\label{L: Product zero}
In the singular diagram VIII, 
the product of any two non-adjacent distances goes to zero. 
\end{lemma}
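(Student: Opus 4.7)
The plan is to exploit the geometry of Singular Diagram VIII, in which $\{1,2,3\}$ forms a small triangle of $zw$-edges and vertex $4$ is isolated and uncircled, to show that each of the distances $r_{14}, r_{24}, r_{34}$ stays bounded along the singular sequence. The non-adjacent pairs of distances are precisely $(r_{12}, r_{34})$, $(r_{13}, r_{24})$ and $(r_{14}, r_{23})$, each of the form $r_{ij}\, r_{k4}$ with $\{i,j,k\} = \{1,2,3\}$. Since the triangle edges force $r_{ij} \simeq \ep \to 0$, boundedness of $r_{k4}$ would immediately give the claim.

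First, I would solve the $2 \times 2$ linear system coming from \eqref{E:BC-ZW} at vertex $4$ to extract the exact identities
\[
\sum_{j=1}^{3} m_j Z_{j4} = z_4 + \eta\, w_4, \qquad \sum_{j=1}^{3} m_j W_{j4} = w_4 + \eta\, z_4.
\]
Next, using $z_{ij}, w_{ij} \simeq \ep$ for $i,j \in \{1,2,3\}$ together with the center-of-mass relation $\sum_{j=1}^{4} m_j z_j = 0$, a direct computation yields
\[
z_{j4} = -\frac{M}{m_1+m_2+m_3}\, z_4 + O(\ep), \qquad w_{j4} = -\frac{M}{m_1+m_2+m_3}\, w_4 + O(\ep),
\]
for $j = 1,2,3$, with $M = m_1+m_2+m_3+m_4$. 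Consequently $r_{j4}^{2} = z_{j4} w_{j4}$ has a common leading value $R^2$ for $j = 1, 2, 3$. Substituting $Z_{j4} = r_{j4}^{-3} z_{j4} \sim R^{-3} z_{j4}$ into the identities above and using $\sum_{j=1}^{3} m_j z_{j4} = -M z_4$ and $\sum_{j=1}^{3} m_j w_{j4} = -M w_4$, I arrive at
\[
(1 + M R^{-3})\, z_4 + \eta\, w_4 \sim 0, \qquad \eta\, z_4 + (1 + M R^{-3})\, w_4 \sim 0.
\]
If $(z_4, w_4)$ is non-trivial at leading order, vanishing of the determinant of this system forces $(1 + M R^{-3})^{2} = \eta^{2}$, giving $R^{3} = -M/(1 \mp \eta)$, a specific finite constant.

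The main technical obstacle will be the degenerate regime where $z_4$ or $w_4$ is small: if $|z_4| \preceq \ep$, then $|z_j| \preceq \ep$ as well by the center-of-mass formula, so $|z_{j4}| \preceq \ep$, while the a priori bound from the absence of circles gives $|w_{j4}| \prec \ep^{-2}$; hence $|r_{j4}^{2}| \prec \ep^{-1}$ and $r_{ij}\, r_{k4} \prec \ep^{1/2} \to 0$, and the symmetric case $|w_4| \preceq \ep$ is analogous. Once these degenerate situations are dispatched by a priori estimates, the generic-case argument produces $R$ as a bounded explicit constant, and the product of any two non-adjacent distances is $\preceq \ep \to 0$.
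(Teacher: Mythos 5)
Your proof is correct, and it reaches the crucial control on $r_{14},r_{24},r_{34}$ by a genuinely different route than the paper. The paper never computes the limiting value of these distances: after recording $z_{12},\dots,w_{23}\simeq\ep$, it splits into cases according to whether the $z_{j4}$ are all of comparable order or one of them is $\prec\ep$, and in each case rules out $r_{j4}\to\infty$ by a soft order comparison in the equation for $w_4$ (an unbounded $r_{j4}$ would force $w_4\prec w_{l4}$, hence $w_l\succ w_4$ for some $l\leq 3$, contradicting the center-of-mass relation); its residual case yields only $r_{k4}^2\prec\ep^{-1}$, which still suffices because the triangle edge contributes $\ep^2$ to the product. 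You instead solve the $2\times2$ system at vertex $4$ exactly, obtaining $\sum_j m_jZ_{j4}=z_4+\eta w_4$ and its $W$-analogue, and in the non-degenerate regime derive an asymptotic eigenvalue condition $(1+MR^{-3})^2=\eta^2$ pinning down the common limiting distance $R$; your degenerate regime $|z_4|\preceq\ep$ or $|w_4|\preceq\ep$ recovers exactly the paper's weaker bound $r_{k4}^2\prec\ep^{-1}$. Your version buys an explicit asymptotic value of $r_{j4}$, at the price of one point you should tighten: when you replace $r_{j4}^{-3}=(z_{j4}w_{j4})^{-3/2}$ by $R^{-3}$ uniformly in $j$, the branch of the root may differ by a sign from one $j$ to another (the paper itself must track such roots of unity when deriving \eqref{E:mass-VIII}), so the honest statement is $\sum_jm_jZ_{j4}\sim-\widetilde{M}R^{-3}z_4$ for some signed combination $\widetilde{M}$ of the masses. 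This does not damage your conclusion---if $R\to\infty$ the system still forces $(1-\eta^2)z_4=o(z_4)$, a contradiction for any value of $\widetilde{M}$, so $R$ remains bounded---but the clean formula $R^3=-M/(1\mp\eta)$ is only valid when the branch signs agree.
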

The proof goes the same {as in \cite{AlbouyKaloshin2012} and} we include it for completeness.

\begin{proof}
{The vertices $1, 2, 3$ form an isolated component. The vertex $4$ alone forms an isolated component. The center of mass condition implies}
\be \label{E: 6.1}
{\sum_{k=1}^{3}m_k z_k \simeq z_4, \quad\sum_{k=1}^{3}m_k w_k \simeq w_4. }
\ee
The $zw$-edges between {pairs of} the vertices $1, 2, 3$ imply the estimates
 $$z_{12} \simeq w_{13} \simeq z_{23} \simeq w_{12} \simeq w_{13} \simeq w_{23} \simeq \ep.$$
Thus if both $z_4,w_4 \preceq \ep$, then $z_{j4} \simeq w_{j4} \simeq \ep$ by Equation \eqref{E:BC-ZW} and the diagram is IX instead of VIII. Contradiction. 

{Without loss of generality {and up to subsequence}, we assume $z_4 \preceq w_4$. Then we have $w_4 \succ \ep$ and it follows from Equation \eqref{E:BC-ZW}} that $w_{14} \sim w_{24} \sim w_{34}$. {We are in one of the following two cases:}
\begin{enumerate}

	\item 
	{$z_{14} \simeq z_{24} \simeq z_{34} \succeq \ep$. This includes in particular the case $z_4 \succ \ep$. In this case we have }
	\[
	r_{14} \simeq r_{24} \simeq r_{34} .
	\]
We claim {$r_{14},r_{24},r_{34}$ are all bounded. Indeed we have $z_{k4} \prec w_{k4}, 1 \leq k \leq 3$ by our assumption. In Equation \eqref{E:BC-ZW} we have} 
\be \label{E:w-N}
w_4 = \frac{1}{1-\eta^2} \sum_{k=1}^3 m_{k} r_{k4}^{-3} w_{k4} -  \frac{\eta}{1-\eta^2} \sum_{k=1}^3 m_{k} r_{k4}^{-3} z_{k4},
\ee
{If {$r_{14} \simeq r_{24} \simeq r_{34}$ are unbounded}, then we would have
$$w_4 \prec w_{l4} = w_l - w_4,$$ 
for {some $l \in \{1,2,3\}$} and thus $w_4 \prec w_l$ which contradicts the assumption. }

{The conclusion now follows from these order estimations on $r_{ij}$'s.}

\item There exists $ 1 \leq i \leq 3$ such that $z_{i4} \prec \ep$. Without loss of generality we {assume} $z_{14} \prec \ep$. {Then $z_{24} \simeq z_{34} \simeq \ep$ and thus $r_{14} \prec r_{24}  \simeq r_{34}$. } 

By Equation \eqref{E:w-N}{,}

 
\[
w_4 \simeq r_{14}^{-3} w_{14}.
\]

Again, if $r_{14}$ is unbounded, we would have $w_4 \prec w_l$ as in the previous case, which contradicts the assumption. Thus $r_{14}$ is bounded.

For other distances $r_{k4}, k=2, 3$ we have
\[
r^2_{k4} = z_{k4}w_{k4} = \left(\frac{z_{k4}}{z_{14}} \right) z_{14} w_{k4} \sim \left(\frac{z_{k4}}{z_{14}}\right) z_{14} w_{14} \preceq \frac{z_{k4}}{z_{14}}  \simeq \frac{\ep}{z_{14}}.
\]
{By Estimate \ref{Est: 1.1} we have
$\ep^2 \prec z_{14}$, thus 
$$r^2_{k4} \prec \ep^{-1}, k=2, 3.$$}

The conclusion now follows by combining these estimates with 
$$r^2_{12} \simeq r^{2}_{23} \simeq r^{2}_{31} \simeq \ep^2.$$
 \end{enumerate}
\end{proof}


Now we are {ready} to prove the main result of this paper, {based on our analysis of singular diagrams. An alternative, direct proof of this perturbative result is given below.}

\begin{proof}[Proof of Theorem \ref{T:main}]

Assume \eqref{E:BC-ZW} has infinitely many solutions. Then at least one mutual distance, say $r_{12}$, takes infinitely many values. 
This means $r_{12}^2$ is a dominating polynomial and can take all values in $\mathbb{C}$ outside a finite set.

We take a singular sequence along which $r_{12} \rightarrow 0$.  This is only possible in diagram VIII and the mass needs to satisfy the condition \eqref{E:mass-VIII}. Then $r_{23}, r_{31} \rightarrow 0$ along this singular sequence. By Lemma \ref{L: Product zero} 
$r_{12}^2 r_{34}^2$, $r_{13}^2 r_{24}^2$ and $r_{23}^2 r_{14}^2$ {tend} to zero{,} therefore they are dominating polynomials
and take all values in $\mathbb{C}$ outside a finite set. Therefore,  $r_{12}^2 r_{34}^2$, $r_{13}^2 r_{24}^2$ and $r_{23}^2 r_{14}^2$ {can} tend to $\infty$ along the variety $\mathcal{A}$ defined by $\mathbf{S}$-balanced configurations.

 Take a singular sequence such that $r_{12}r_{34} \rightarrow \infty$. 
By the Estimates {1 and 2}, we are in the singular diagrams I-VII {up to numeration of vertices.}  

Since for given {masses}, only {the} singular diagram II is {relevant for sufficiently small $\eta>0$.} The corresponding mass condition is 
\[
(m_1m_2 -m_3m_4)^2 = \eta^2(m_1+m_4)(m_3+m_2)(m_1+m_3)(m_2+m_4).
\]
Analogously, let $r_{13}r_{24} \rightarrow \infty${. This }is possible only if
\[
(m_1m_3 -m_2m_4)^2 = \eta^2(m_1+m_4)(m_3+m_2)(m_4+m_3)(m_2+m_1).
\] 

Let $r_{23}r_{14} \rightarrow \infty$, we have the condition
\[
(m_1m_4-m_2m_3)^2= \eta^2 (m_1+m_2)(m_3+m_4)(m_1+m_3)(m_2+m_4).
\]
All these mass conditions have to be satisfied simultaneously. We thus get
\[
m_i = m_j + O(\eta^2), 1 \leq i \neq j \leq 4.
\]
Nevertheless, this are incompatible with \eqref{E:mass-VIII} for small $0<\eta\ll 1$.  The proof is completed.

\end{proof}

\section{{Perturbative finiteness for $N$-bodies in the plane}} \label{S:N-body}

The perturbative result Theorem \ref{T:main} for {4 point masses admits an alternative, simpler proof, which can subsequently be extended to $N$ point masses in the plane, provided that the corresponding finiteness for central configurations is established.}

\begin{theorem} \label{T:main-2}
	For any positive {masses} $(m_1,\ldots,{m_N})$, suppose there are only finitely many central configurations for the {corresponding $N$-body system} in the plane {up to similitudes, then} there exists {$\eta_0>0$} such {that} for each $0<\eta<\eta_0$, the system \eqref{E:BC-1} has only finitely many solutions up to dilations in the plane.    
\end{theorem}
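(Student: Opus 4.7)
The plan is to argue by contradiction, building on Lemma \ref{L:Continuous} and the natural extension of Lemma \ref{L:Exclude-unequal} to general $N$, together with the $\mathbb{C}^*$-symmetry structure of $\mathcal{A}_0$ that is broken when $\eta > 0$.

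Suppose the conclusion fails: there exists $\eta_m \to 0^+$ with $\mathcal{A}_{\eta_m}$ infinite for every $m$. My first step is to extend Lemma \ref{L:Exclude-unequal} to $N$ bodies. Its proof relies only on Proposition \ref{P:Non-dominited}, Proposition \ref{P:Est-dis}, the identity \eqref{E:BC-moment}, and Lemma \ref{L:Continuous}, all of which transfer to arbitrary $N$ once the CC finiteness hypothesis feeds into Lemma \ref{L:Continuous}. Consequently, for $\eta_m$ sufficiently small, every singular sequence in $\mathcal{A}_{\eta_m}$ is of equal order.

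Next, to rule out equal-order sequences, select $\{\xi_m^{(n)}\}_n \subset \mathcal{A}_{\eta_m}$ with $\|\mathcal{Z}_m^{(n)}\| \simeq \|\mathcal{W}_m^{(n)}\| \sim \ep_n^{-2}$, and apply Lemma \ref{L:Continuous}, which covers the equal-order regime since its hypothesis $\|\mathcal{W}\| \preceq \|\mathcal{Z}\| \sim \ep^{-2}$ is met: for each fixed $n$, up to subsequence in $m$, the normalized singular element converges to a central configuration $(\mathcal{Z}^*, \mathcal{W}^*)$ with matched norm product. CC finiteness combined with the $\mathbb{C}^*$-action $(\mathcal{Z}, \mathcal{W}) \mapsto (\alpha \mathcal{Z}, \alpha^{-1}\mathcal{W})$ on $\mathcal{A}_0$ yields, after a diagonal extraction, a single projective CC class. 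Writing $\xi_m^{(n)} \sim (\alpha \mathcal{Z}^*, \alpha^{-1}\mathcal{W}^*)$ and substituting into \eqref{E:BC-ZW} while invoking the CC identities $z_j^* = \sum_k m_k Z_{kj}^*$ and $w_j^* = \sum_k m_k W_{kj}^*$, one obtains the asymptotic algebraic constraint $\eta_m \alpha^2 z_j^* = w_j^*$ for every $j$. This is simultaneously solvable in $\alpha$ only when $w_j^*/z_j^*$ is $j$-independent, a rigid condition satisfied by at most finitely many of the finitely many CCs, each yielding at most finitely many values of $\alpha$. The resulting finite asymptotic data contradicts the infinitude of $\mathcal{A}_{\eta_m}$, and the theorem follows.

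The main obstacle will be upgrading the leading-order relation $\eta_m \alpha^2 z_j^* = w_j^*$ into a genuine algebraic identity on $\mathcal{A}_{\eta_m}$. I plan to handle this by perturbatively expanding $\xi_m^{(n)} = (\alpha \mathcal{Z}^* + \delta \mathcal{Z}, \alpha^{-1}\mathcal{W}^* + \delta \mathcal{W})$ in \eqref{E:BC-ZW}, controlling the higher-order terms via the equal-order norm bounds and the $\ep_n$-scaling, and invoking Proposition \ref{P:Non-dominited} to confine $I_\mathbf{S}$ to finitely many values along the singular sequence. The degenerate situation where $w_j^*/z_j^*$ is genuinely $j$-independent (as happens, for instance, for collinear CCs) reduces to a discrete set of admissible $\alpha$'s per CC class, and the perturbative argument still produces only finitely many BC candidates, closing the contradiction.
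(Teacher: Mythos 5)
Your treatment of the non-equal-order case is essentially the paper's: one shows that the argument of Lemma \ref{L:Exclude-unequal} needs only Proposition \ref{P:Non-dominited}, Rule (3a) and Proposition \ref{P:Est-dis} (not the four-body diagram list), so it transfers to $N$ bodies. The gap is in your equal-order case, and it is twofold. First, the constraint $\eta_m\alpha^2 z_j^*=w_j^*$ is obtained by substituting the limit profile into \eqref{E:BC-ZW} and cancelling the leading terms via the CC identity $z_j^*=\sum_k m_k Z_{kj}^*$; but after that cancellation both sides of your relation are \emph{subleading} quantities of order $\eta_m$ times the leading order, while Lemma \ref{L:Continuous} only gives convergence of the normalized sequence with an error that is not controlled relative to $\eta_m$. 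The error of approximation can swamp the claimed identity, and the "perturbative expansion" you defer to is exactly the missing (and hard) content. Second, and more fundamentally, even if you established that the asymptotic profiles $(\alpha\mathcal{Z}^*,\alpha^{-1}\mathcal{W}^*)$ form a finite set, this does not contradict the infinitude of the variety $\mathcal{A}_{\eta_m}$: an infinite algebraic variety can perfectly well have all of its singular sequences converging to a single asymptotic profile. Finiteness of limit data is never the source of the contradiction in this circle of ideas; the contradiction must come from an order estimate that is incompatible with some invariant of the variety.

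The paper closes the equal-order case in one step, without any matching of $\alpha$'s. Since the limit $(\mathcal{Z}^*,\mathcal{W}^*)$ in Lemma \ref{L:Continuous} is a genuine central configuration, \emph{all} its mutual distances are finite and nonzero, so all the limits $C_{kj},D_{kj}$ in \eqref{E:conv} are nonzero; then \eqref{E:Distance} gives $r_{kj}^{-4}\simeq\|\mathcal{Z}\|\cdot\|\mathcal{W}\|\simeq\ep^{-4}$, i.e.\ $r_{kj}\simeq\ep\to 0$ for \emph{every} pair, whence $z_{kj},w_{kj}\simeq\ep$ and, by \eqref{E:BC-moment}, $I_{\mathbf{S}}(\mathbf{q})\to 0$. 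This contradicts Proposition \ref{P:Non-dominited}, which you already have at your disposal but invoke only as a technical aid rather than as the engine of the contradiction. I would recommend replacing your entire second step with this total-collapse argument: it uses only the qualitative fact that the limit is a CC (all normalized strokes nonzero), requires no control of the $O(\eta_m)$ corrections, and avoids the $j$-independence discussion of $w_j^*/z_j^*$ entirely.
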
 
 
\begin{proof}[Proof of Theorem \ref{T:main-2}]
	
	Assume there are only finitely many central configurations but {infinitely} many $\mathbf{S}$-balanced configurations in the plane for a sequence $\eta_m \rightarrow 0$ as $m \rightarrow {\infty}$. Then we {get} a singular sequence  {$(\mathcal{Z}_{m},\mathcal{W}_{m})$} for each $m \in \mathbb{N}$. 
	\begin{enumerate}
		\item If $\|\mathcal{Z}\| \simeq \|\mathcal{W}\| \sim \ep^{-2}$, {Equation} \eqref{E:Distance} implies $r_{kj} \simeq \ep$ for any $(k,j)$, $1 \leq k <  j \leq {N}$. Therefore $Z_{kj} \simeq W_{kj} \simeq \ep^{-2}${,}  {hence $z_{kj},w_{kj} \simeq \ep$}. {By Equation} \eqref{E:BC-moment}, we get $I_\mathbf{S}(\mathbf{q}) \rightarrow 0$. A contradiction.
		\item If $\|\mathcal{W}\| \prec \|\mathcal{Z}\| \sim \ep^{-2}$, the same argument in the proof of Lemma \ref{L:Exclude-unequal} exclude the case $\|\mathcal{Z}\| \cdot \|\mathcal{W}\| \succ 1$. 
		
		Therefore it is {sufficient} to consider the case $\|\mathcal{Z}\| \cdot \|\mathcal{W}\| \simeq 1$ for which all $r_{kj} \simeq 1$. {Rule (3a)} implies {that} there is at least one $z$-{edge} connecting, say, vertices $1$ and $2$. Proposition \ref{P:Est-dis} implies that $r_{12} \prec 1$. Then we get a contradiction with \eqref{E:Distance} for {all} mutual distances  $r_{kj} \simeq 1$. The proof is completed. 
	\end{enumerate}
\end{proof}
Together with the result of Albouy and Kaloshin on the generic finiteness of the five-body problem \cite{AlbouyKaloshin2012}, we easily obtain
\begin{corollary}
For any {positive} mass $(m_1,m_2,m_3,m_4,m_5) {\in \R^{5}_{+}}$ outside a co-dimension two subvariety defined in \cite{AlbouyKaloshin2012}, there exists {$\eta_0>0$} such that for any $0<\eta<\eta_0$, the system \eqref{E:BC-1} has only finitely many solutions up to dilations in the plane. 
\end{corollary}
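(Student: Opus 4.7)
The plan is to obtain this corollary as a direct application of Theorem \ref{T:main-2} combined with the generic finiteness of planar five-body central configurations established by Albouy and Kaloshin in \cite{AlbouyKaloshin2012}.

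First I would invoke the main result of \cite{AlbouyKaloshin2012}, which asserts that there exists a subvariety $\mathcal{V} \subset \R^{5}_{+}$ of codimension at least two (explicitly described by polynomial relations in $(m_1,\ldots,m_5)$) such that for every positive mass vector $(m_1,\ldots,m_5) \in \R^{5}_{+} \setminus \mathcal{V}$, the number of central configurations of the planar five-body problem, counted modulo rotations and dilations, is finite. Fix such a mass vector outside $\mathcal{V}$. This verifies the finiteness hypothesis required in Theorem \ref{T:main-2} for $N=5$.

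Next, I would apply Theorem \ref{T:main-2} with these masses: it produces an $\eta_0 > 0$ (depending on the chosen masses) such that for every $0 < \eta < \eta_0$, the algebraic system \eqref{E:BC-1} with the associated $\mathbf{S}$ having eigenvalues $1 \pm \eta$ admits only finitely many solutions up to dilations in the plane. This is precisely the conclusion of the corollary.

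There is no substantial obstacle, since the corollary is a direct transfer of the central-configuration finiteness input to the $\mathbf{S}$-balanced setting via Theorem \ref{T:main-2}. The only point worth stressing in the write-up is that the constant $\eta_0$ depends on the chosen mass vector (through the arguments in the proof of Lemma \ref{L:Exclude-unequal}, which exclude non-equal-order singular sequences using the pointwise convergence to central configurations established in Lemma \ref{L:Continuous}), so the statement is pointwise in the masses outside $\mathcal{V}$ rather than uniform.
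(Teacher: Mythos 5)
Your proposal is correct and follows exactly the route the paper intends: the paper derives this corollary as an immediate consequence of Theorem \ref{T:main-2} applied with $N=5$, using the Albouy--Kaloshin generic finiteness of planar five-body central configurations outside a codimension-two subvariety as the required hypothesis. Your additional remark that $\eta_0$ depends on the chosen mass vector is accurate and consistent with the proof of Lemma \ref{L:Exclude-unequal}.
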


{\bf Acknowledgement:} We thank Alain Albouy for the reference \cite{Moeckel1997}. Y. W. is supported by DFG ZH 605/1-2 and NSFC 12471110. L. Z. is supported by DFG ZH 605/1-2 and DFG ZH 605/4-1.


\begin{thebibliography}{10}
	
	\bibitem{Albouy1995}
	A.~Albouy,
	\newblock Sym\'etrie des configurations centrales de quatre corps,
	\newblock \textit{C. R. Acad. Sci. Paris S\'er. I Math.} 320(2):217--220, (1995).
	
	\bibitem{Alain1998}
	A.~Albouy and A.~Chenciner,
	\newblock Le probl\`eme des {$n$} corps et les distances mutuelles,
	\newblock \textit{Invent. Math.} 131(1):151--184, (1998).
	
	\bibitem{AlbouyKaloshin2012}
	A.~Albouy and V.~Kaloshin,
	\newblock Finiteness of central configurations of five bodies in the plane,
	\newblock \textit{ Ann. of Math.} 176(1):535--588, (2012).
	
	\bibitem{Portaluri2023}
	L.~Asselle and A.~Portaluri,
	\newblock Morse theory for {$S$}-balanced configurations in the {N}ewtonian
	{$n$}-body problem,
	\newblock \textit{J. Dyn. Differ. Equ.} 35(1):907--946, (2023).
	
	\bibitem{Bruns1887}
	H.~Bruns,
	\newblock {\"U}ber die {I}ntegrale des {V}ielk\"orper-{P}roblems,
	\newblock \textit{Acta Math.} 11(1-4):25--96, (1887).
	
	\bibitem{ChangChen1}
	K.~Chang and K.~Chen,
	\newblock Toward finiteness of central configurations for the planar six-body
	problem by symbolic computations, ({I}) {D}etermine diagrams and orders,
	\newblock \textit{J. Symb. Comput.} 123, 102277, (2024).
	
	\bibitem{ChangChen2}
	K.~Chang and K.~Chen,
	\newblock Toward finiteness of central configurations for the planar six-body
	problem by symbolic computations, (II) determine mass relations,
	\newblock \textit{SSRN: 4410777}, (2024). 
	\newblock \url{https://papers.ssrn.com/sol3/papers.cfm?abstract_id=4410777}
		
	\bibitem{Chenciner1998}
	A.~Chenciner,
	\newblock Collisions totales, mouvements compl\`etement paraboliques et
	r\'eduction des homoth\'eties dans le probl\`eme des {$n$} corps,
	\newblock \textit{Regul. Chaotic Dyn.} 3(3):93--106, (1998).
	
	\bibitem{Alain2011}
	A.~Chenciner,
	\newblock The {L}agrange reduction of the {$n$}-body problem, a survey,
	\newblock \textit{Acta Math. Vietnam.} 38(1):165--186, (2013).
	
	\bibitem{Euler}
	L.~Euler,
	\newblock De motu rectilineo trium corporum se mutuo attrahentium,
	\newblock \textit{Novi Comment. Acad. Sci. Imp. Petropol.} 11: 144-151, (1767).
	
	\bibitem{EGAIV}
	A.~Grothendieck,
	\newblock {\'E}l\'ements de g\'eom\'etrie alg\'ebrique {IV}: {\'E}tude locale
	des sch\'emas et des morphismes de sch\'emas. {I},
	\newblock \textit{Inst. Hautes \'Etudes Sci. Publ. Math.} 20:259, (1964).
	
	\bibitem{HamMoecke2006}
	M.~Hampton and R.~Moeckel,
	\newblock Finiteness of relative equilibria of the four-body problem,
	\newblock \textit{Invent. Math.} 163(2):289--312, (2006).
	
	\bibitem{HeckmanZhao2016}
	G.~Heckman and L.~Zhao,
	\newblock Angular momenta of relative equilibrium motions and real moment map
	geometry,
	\newblock \textit{Invent. Math.} 205(3):671--691, (2016).
	
	\bibitem{JenLey2023}
	A.~N. Jensen and A.~Leykin,
	\newblock Smale's 6th problem for generic masses,
	\newblock \textit{arXiv: 2301.02305}, (2023).
	
	\bibitem{Tosel}
	E.~Julliard-Tosel,
	\newblock Bruns' Theorem: the Proof and some Generalizations,
	\newblock \textit{Celest. Mech. Dynam. Astronom.} 76(4):241--281, (2000).
	
	\bibitem{Lagrange1772}
	J.-L. Lagrange,
	\newblock Essai sur le probl\`eme des trois corps,
	\newblock \textit{{\OE}uvres,} t.6:229--331, Gauthier-Villars, Paris, (1772).

	
	\bibitem{Moeckel1997}
	R.~Moeckel,
	\newblock Relative equilibria with clusters of small masses,
	\newblock \textit{J. Dyn. Differ. Equ.} 9(4):507--533, (1997).
	
	\bibitem{Moeckel2014}
	R.~Moeckel,
	\newblock Central configurations,
	\newblock In \textit{Central Configurations, Periodic Orbits, and {H}amiltonian
		systems},  Adv. Courses Math. CRM Barcelona, 105--167.
	Birkh\"auser, Basel, (2015).
	
	\bibitem{Montaldi2015}
	J.~Montaldi,
	\newblock Existence of symmetric central configurations,
	\newblock \textit{Celest. Mech. Dyn. Astron.} 122(4):405--418, (2015).
	
	\bibitem{Moulton1911}
	F. R. ~Moulton,
	\newblock The straight line solutions of the problem of {$n$} bodies,
	\newblock \textit{Ann. of Math.} 12(1):1-17, (1910).
	
	\bibitem{Mumford1976}
	D.~Mumford,
	\newblock \textit{Algebraic Geometry. {I}},
	\newblock Springer-Verlag, Berlin-New York, (1976).

	
	\bibitem{Newton1687}
	I.~S. Newton,
	\newblock \textit{Philosophiae naturalis principia mathematica,}
	\newblock William Dawson \& Sons Ltd., London, (1687).
	
	\bibitem{Poincare}
	H.~Poincar\'e,
	\newblock \textit{Les m\'ethodes nouvelles de la m\'ecanique c\'eleste}, t.2,
	\newblock Librairie Scientifique et Technique Albert Blanchard, Paris, (1987).
	
	\bibitem{Smale1998}
	S.~Smale,
	\newblock Mathematical problems for the next century,
	\newblock \textit{Math. Intell.} 20(2):7--15, (1998).
	
	\bibitem{Wintner1941}
	A.~Wintner,
	\newblock \textit{The {A}nalytical {F}oundations of {C}elestial {M}echanics},
	vol. 5 of {Princeton Mathematical Series},
	\newblock Princeton University Press, Princeton, NJ, (1941).
	
	\bibitem{Xia1991}
	Z.~Xia,
	\newblock Central configurations with many small masses,
	\newblock \textit{J. Differ. Equ.} 91(1):168--179, (1991).
	
\end{thebibliography}
	
\end{document}